\def\rk{\mathop{\rm rk}\nolimits}
\def\vertexradius{.1}
\def\vertex(#1){\fill (#1) circle (\vertexradius)}
\def\myscale{.5}
\def\mywidth{.8pt}
\def\stilde{\tilde{s}}
\def\htilde{\tilde{h}}
\def\Hnbar{\overline{H^n}}
\def\Fbar{\bar{F}} 
\def\L{\Lambda} 
\def\F{\mathbb{F}}     
\def\Q{\mathbb{Q}}     
\def\Z{\mathbb{Z}}     
\def\N{\mathbb{N}}     
\def\R{\mathbb{R}}     
\def\St{\mathfrak{St}} 
\def\PSt{\mathfrak{PSt}} 
\def\g{\mathfrak{g}}   
\def\G{\mathfrak{G}}   
\def\hbar{\bar{h}}     
\def\U{\mathfrak{U}}   
\def\stilde{\tilde{s}}  
\def\tensor{\otimes}
\def\iso{\cong}
\def\freeproduct{\mathop{*}}
\def\sset{\subseteq}
\def\set#1#2{\{#1:#2\}}
\def\mathrlap#1{\mathchoice
{\rlap{$\displaystyle #1$}}%
{\rlap{$\textstyle #1$}}%
{\rlap{$\scriptstyle #1$}}%
{\rlap{$\scriptscriptstyle #1$}}}
\newtheorem{theorem}{Theorem}
\newtheorem{lemma}[theorem]{Lemma}
\newtheorem{corollary}[theorem]{Corollary}
\theoremstyle{remark}
\newtheorem*{remark}{Remark}
\begin{document}

\title{Presentation of Hyperbolic
  Kac--Moody groups over rings}
\author{Daniel Allcock}
\address{Department of Mathematics\\University of Texas, Austin}
\email{allcock@math.utexas.edu}
\urladdr{http://www.math.utexas.edu/\textasciitilde allcock}
\author{Lisa Carbone}
\address{Department of Mathematics\\
Rutgers, The State University of NJ}
\email{carbonel@math.rutgers.edu}
\urladdr{http://www.math.rutgers.edu/\textasciitilde carbonel}
\subjclass[2000]{%
Primary: 20G44
; Secondary: 14L15
, 22E67
, 19C99
}
\date{September 20, 2014}

\begin{abstract}
Tits has defined Kac--Moody  and Steinberg groups over commutative
rings, providing
infinite dimensional analogues of the Chevalley--Demazure group schemes.
Here we establish simple explicit presentations for all Steinberg and
Kac--Moody groups whose Dynkin diagrams are hyperbolic and simply
laced.  Our presentations are analogues of the Curtis--Tits
presentation of the finite groups of Lie type.  When the ground ring
is finitely generated, we derive the finite presentability of the
Steinberg group, and similarly for the Kac--Moody group when the
ground ring is a Dedekind domain of arithmetic type.  These
finite-presentation results need slightly stronger hypotheses when the
rank is smallest possible, namely~$4$.  The presentations simplify
considerably when the ground ring is~$\Z$, a case of special interest
because of the conjectured role of the Kac--Moody group $E_{10}(\Z)$
in superstring theory.
\end{abstract}

\maketitle

\section{Introduction}
\label{sec-introduction}

\noindent
Kac--Moody groups are infinite-dimensional generalizations of reductive
Lie groups and algebraic groups.  Over general rings, their final definition
has not yet been found---it should be some sort of generalization of
the Chevalley--Demazure group schemes.  Given any root system, Tits
defined a functor from commutative rings to groups and proved that it approximates any acceptable definition,
and gives the unique best definition when the coefficient ring is a
field \cite{Tits}.  His definition, by generators and relations, is
very complicated.  Even enumerating his relations in
non-affine examples is difficult and in some cases impracticable
(\cite{Carbone-Murray}, \cite{Allcock-prenilpotent}).

In this paper we study the question of improving this in the case of
the simplest non-affine Dynkin diagrams: the simply laced hyperbolic
ones.  The main result is that these, and related groups, have quite
simple presentations, often finite.  Our results parallel those
established for the affine case in \cite{Allcock-affine}.  Near the
end of the introduction we will remark on the situation beyond the
affine and simply-laced hyperbolic cases.

An irreducible Dynkin diagram is called {\it hyperbolic\/} if it is
not of affine or finite dimensional type, but its proper irreducible
subdiagrams are.  It is called {\it simply laced\/} if each pair of
nodes is either unjoined, or joined by a single bond.  One can
classify the simply laced hyperbolic diagrams \cite[\S6.9]{Humphreys},
namely those in table~\ref{tab-diagrams}.  The most important one is
the last, known as $E_{10}$, because of its conjectural role in
superstring theory (see below).  
We will pass
between Dynkin diagrams and their generalized Cartan matrices whenever
convenient.

\begin{table}
\setlength\unitlength{1mm}
\begin{picture}(110,99)(0,-4)
\put(0,2){\makebox[0pt][l]{rank 10}}
\put(15,0){\makebox[0pt][l]{
\begin{tikzpicture}[line width=\mywidth, scale=\myscale]
\draw (0,0) -- (7,0);
\draw (1,0) -- (1,1);
\draw (5,0) -- (5,1);
\vertex (0,0);
\vertex (1,0);
\vertex (2,0);
\vertex (3,0);
\vertex (4,0);
\vertex (5,0);
\vertex (6,0);
\vertex (7,0);
\vertex (1,1);
\vertex (5,1);
\end{tikzpicture}%
}}%
\put(55,0){\makebox[0pt][l]{
\begin{tikzpicture}[line width=\mywidth, scale=\myscale]
\draw (0,0) -- (8,0);
\draw (2,0) -- (2,1);
\vertex (0,0);
\vertex (1,0);
\vertex (2,0);
\vertex (3,0);
\vertex (4,0);
\vertex (5,0);
\vertex (6,0);
\vertex (7,0);
\vertex (8,0);
\vertex (2,1);
\end{tikzpicture}%
}}%
\put(0,15){\makebox[0pt][l]{rank 9}}
\put(15,13){\makebox[0pt][l]{
\begin{tikzpicture}[line width=\mywidth, scale=\myscale]
\draw (0,0) -- (6,0);
\draw (1,0) -- (1,1);
\draw (4,0) -- (4,1);
\vertex (0,0);
\vertex (1,0);
\vertex (2,0);
\vertex (3,0);
\vertex (4,0);
\vertex (5,0);
\vertex (6,0);
\vertex (1,1);
\vertex (4,1);
\end{tikzpicture}%
}}%
\put(50,13){\makebox[0pt][l]{
\begin{tikzpicture}[line width=\mywidth, scale=\myscale]
\draw (0,0) -- (7,0);
\draw (3,0) -- (3,1);
\vertex (0,0);
\vertex (1,0);
\vertex (2,0);
\vertex (3,0);
\vertex (4,0);
\vertex (5,0);
\vertex (6,0);
\vertex (7,0);
\vertex (3,1);
\end{tikzpicture}%
}}%
\put(90,9){\makebox[0pt][l]{
\begin{tikzpicture}[line width=\mywidth, scale=\myscale]
\draw 
(180:2.307) -- 
(180:1.307) -- 
(135:1.307) -- 
(90:1.307) -- 
(45:1.307) --
(0:1.307) --
(-45:1.307) --
(-90:1.307) -- 
(-135:1.307) --
(180:1.307);
\vertex (180:2.307); 
\vertex (180:1.307); 
\vertex (135:1.307); 
\vertex (90:1.307); 
\vertex (45:1.307);
\vertex (0:1.307);
\vertex (-45:1.307);
\vertex (-90:1.307); 
\vertex (-135:1.307); 
\end{tikzpicture}%
}}%
\put(0,28){\makebox[0pt][l]{rank 8}}
\put(15,27){\makebox[0pt][l]{
\begin{tikzpicture}[line width=\mywidth, scale=\myscale]
\draw (0,0) -- (5,0);
\draw (1,0) -- (1,1);
\draw (3,0) -- (3,1);
\vertex (0,0);
\vertex (1,0);
\vertex (2,0);
\vertex (3,0);
\vertex (4,0);
\vertex (5,0);
\vertex (1,1);
\vertex (3,1);
\end{tikzpicture}%
}}%
\put(45,27){\makebox[0pt][l]{
\begin{tikzpicture}[line width=\mywidth, scale=\myscale]
\draw (0,0) -- (5,0);
\draw (2,0) -- (2,2);
\vertex (0,0);
\vertex (1,0);
\vertex (2,0);
\vertex (3,0);
\vertex (4,0);
\vertex (5,0);
\vertex (2,1);
\vertex (2,2);
\end{tikzpicture}%
}}%
%
\put(75,26){\makebox[0pt][l]{
\begin{tikzpicture}[line width=\mywidth, scale=\myscale]
\draw 
(180:2.152) -- 
(180:1.152) -- 
(128:1.152) -- 
(77:1.152) -- 
(26:1.152) --
(-26:1.152) --
(-77:1.152) --
(-128:1.152) --
(180:1.152);
\vertex (180:2.152);
\vertex (180:1.152);
\vertex (128:1.152);
\vertex (77:1.152);
\vertex (26:1.152);
\vertex (-26:1.152);
\vertex (-77:1.152);
\vertex (-128:1.152);
\vertex (180:1.152);
\end{tikzpicture}%
}}%
\put(0,46){\makebox[0pt][l]{rank 7}}
\put(15,45){\makebox[0pt][l]{
\begin{tikzpicture}[line width=\mywidth, scale=\myscale]
\draw (0,0) -- (4,0);
\draw (1,0) -- (1,1);
\draw (2,0) -- (2,1);
\vertex (0,0);
\vertex (1,0);
\vertex (2,0);
\vertex (3,0);
\vertex (4,0);
\vertex (1,1);
\vertex (2,1);
\end{tikzpicture}%
}}%
\put(40,43){\makebox[0pt][l]{
\begin{tikzpicture}[line width=\mywidth, scale=\myscale]
\draw (-2,0)--(-1,0)--(-.5,.866)--(.5,.866)--(1,0)--(.5,-.866)--(-.5,-.866)--(-1,0);
\vertex (-2,0);
\vertex (-1,0);
\vertex (-.5,.866);
\vertex (.5,.866);
\vertex (1,0);
\vertex (.5,-.866);
\vertex (-.5,-.866);
\end{tikzpicture}%
}}%
\put(0,61.5){\makebox[0pt][l]{rank 6}}
\put(15,57){\makebox[0pt][l]{
\begin{tikzpicture}[line width=\mywidth, scale=\myscale]
\draw (0,0) -- (3,0);
\draw (2,-1) -- (2,1);
\vertex (0,0);
\vertex (1,0);
\vertex (2,0);
\vertex (3,0);
\vertex (2,1);
\vertex (2,-1);
\end{tikzpicture}%
}}%
\put(35,57.3){\makebox{
\begin{tikzpicture}[line width=\mywidth, scale=\myscale]
\draw (0,0) -- (-1,0);
\draw (0,0) -- (-.309,.951);
\draw (0,0) -- (-.309,-.951);
\draw (0,0) -- (.809,.588);
\draw (0,0) -- (.809,-.588);
\vertex (0,0);
\vertex (-1,0);
\vertex (-.309,.951);
\vertex (-.309,-.951);
\vertex (.809,.588);
\vertex (.809,-.588);
\end{tikzpicture}%
}}%
\put(49,58){\makebox[0pt][l]{
\begin{tikzpicture}[line width=\mywidth, scale=\myscale]
\draw (-1.851,0)--(-.851,0)--(-.263,.809)--(.688,.5)--(.688,-.5)--(-.263,-.809)--(-.851,0);
\vertex (-1.851,0);
\vertex (-.851,0);
\vertex (-.263,.809);
\vertex (-.263,-.809);
\vertex (.688,.5);
\vertex (.688,-.5);
\end{tikzpicture}%
}}%
\put(0,76){\makebox[0pt][l]{rank 5}}
\put(15,73){\makebox[0pt][l]{
\begin{tikzpicture}[line width=\mywidth, scale=\myscale]
\draw (0,0) -- (0,1.414) -- (1.414,1.414) -- (1.414,0) -- (0,0);
\draw (0,0) -- (1.414,1.414);
\vertex (0,0);
\vertex (0,1.414);
\vertex (1.414,0);
\vertex (1.414,1.414);
\vertex (.707,.707);
\end{tikzpicture}%
}}%
\put(28,73){\makebox[0pt][l]{
\begin{tikzpicture}[line width=\mywidth, scale=\myscale]
\draw (0,0) -- (.707,.707) -- (1.414,0) -- (.707,-.707) -- (0,0);
\draw (0,0) -- (-1,0);
\vertex (0,0);
\vertex (.707,.707);
\vertex (.707,-.707);
\vertex (1.414,0);
\vertex (-1,0);
\end{tikzpicture}%
}}%
\put(0,89){\makebox[0pt][l]{rank 4}}
\put(15,86){\makebox[0pt][l]{
\begin{tikzpicture}[line width=\mywidth, scale=\myscale]
\draw (0,1) -- (-.866,-.5) -- (.866,-.5) -- (0,1);
\draw (0,0) -- (0,1);
\draw (0,0) -- (-.866,-.5);
\draw (0,0) -- (.866,-.5);
\vertex (0,0);
\vertex (0,1);
\vertex (-.866,-.5);
\vertex (.866,-.5);
\end{tikzpicture}%
}}%
\put(30,87){\makebox{
\begin{tikzpicture}[line width=\mywidth, scale=\myscale]
\draw (0,0) -- (0,1) -- (1,1) -- (1,0) -- (0,0);
\draw (0,0) -- (1,1);
\vertex (0,0);
\vertex (0,1);
\vertex (1,0);
\vertex (1,1);
\end{tikzpicture}%
}}%
\put(40,87){\makebox[0pt][l]{
\begin{tikzpicture}[line width=\mywidth, scale=\myscale]
\draw (0,0) -- (.732,.5) -- (.732,-.5) -- (0,0);
\draw (0,0) -- (-1,0);
\vertex (0,0);
\vertex (.732,.5);
\vertex (.732,-.5);
\vertex (-1,0);
\end{tikzpicture}%
}}%
\end{picture}
\caption{The simply-laced hyperbolic Dynkin diagrams.  The {\it
  rank\/} means the number of nodes.}
\label{tab-diagrams}
\end{table}

For each generalized Cartan matrix $A$, Tits \cite{Tits} defined the
{\it Steinberg group}, a functor $\St_A$ from commutative rings to
groups that generalizes Steinberg's definition from the classical
finite dimensional case 
(the group $G'$ on p.~78 of \cite{Steinberg}).
Morita and Rehmann \cite{Morita-Rehmann}
give another definition, but it agrees with Tits' for the diagrams in
table~\ref{tab-diagrams} because these are $2$-spherical without
isolated nodes \cite[Remark ${\rm a}_4$, p.~550]{Tits}.  By taking a
quotient of $\St_A$, Tits defined another functor
$\G_{\!A}$ from commutative rings to groups, which we call the {\it Kac--Moody group}.  We call
$\St_A$ and $\G_{\!A}$ hyperbolic if $A$ is hyperbolic.
(Note: Tits actually defined a group functor $\widetilde{\G}_D$
for each root datum~$D$.  By $\G_{\!A}$ we mean $\widetilde{\G}_D$
where $D$ is the root datum which has generalized Cartan matrix~$A$
and is ``simply connected
in the strong sense'' \cite[p.~551]{Tits}.)

Tits showed that his model of a Kac--Moody group is the natural one,
at least for fields.  Namely: any group functor with some obviously
desirable properties admits a functorial homomorphism from $\G_{\!A}$,
which at every field is an isomorphism \cite[Thm.~1$'$, p.~553]{Tits}.
Tits does not call $\G_{\!A}$ a Kac--Moody group.  We call it this
just to have a name for the closest known approximation to whatever
the ultimate definition of ``the'' Kac--Moody functors will be.

Let $R$ be a commutative ring.  Tits' definition of $\St_A(R)$ is by a
presentation with a generator $X_\alpha(t)$ for each real root
$\alpha$ of the Kac--Moody algebra $\g_A$ and each $t\in R$.  Whenever
two real roots $\alpha,\beta$ form a {\it prenilpotent pair\/}
(defined in section~\ref{sec-Curtis-Tits-presentation}), Tits imposes
a relation $[X_\alpha(t),X_\beta(u)]=\cdots$ for each pair $t,u\in R$.
The right side is a product of other generators $X_\gamma(v)$,
generalizing the classical Chevalley relations; see
section~\ref{sec-Curtis-Tits-presentation} for the details in the
cases we need.  Unless $A$ has finite-dimensional type, there are
infinitely many Weyl-group orbits of prenilpotent pairs, yielding
infinitely many distinct kinds of relations.  Our main result is a
new, much simpler, presentation, given entirely in terms of the Dynkin
diagram:

\def\FOOalign#1#2{#1=\mathrlap{#2}\kern120pt}
\def\FOOtag#1{\rlap{\rm #1}\kern90pt}
\begin{table}
\begin{align*}
\left.
\begin{aligned}
\FOOalign{X_i(t)X_i(u)}{X_i(t+u)}\\
\FOOalign{[S_i^2,X_i(t)]}{1}\\
\FOOalign{S_i}{X_i(1) S_i X_i(1) S_i^{-1} X_i(1)}
\end{aligned}
\right\}\FOOtag{all $i$}
\\
\noalign{\medskip}
\left.
\begin{aligned}
\FOOalign{S_i S_j}{S_j S_i}\\
\FOOalign{[S_i,X_j(t)]}{1}\\
\FOOalign{[X_i(t),X_j(u)]}{1}
\end{aligned}
\right\}
\FOOtag{all unjoined $i\neq j$}
\\
\noalign{\medskip}
\left.
\begin{aligned}
\FOOalign{S_i S_j S_i}{S_j S_i S_j}\\
\FOOalign{S_i^2 S_j S_i^{-2}}{S_j^{-1}}\\
\FOOalign{X_i(t) S_j S_i}{S_j S_i X_j(t)}\\
\FOOalign{S_i^2 X_j(t) S_i^{-2}}{X_j(t)^{-1}}\\
\FOOalign{[X_i(t),S_i X_j(u) S_i^{-1}]}{1}\\
\FOOalign{[X_i(t),X_j(u)]}{S_i X_j(t u) S_i^{-1}}
\end{aligned}
\right\}
\FOOtag{all joined $i\neq j$}
\end{align*}
\caption{Defining relations for $\St_A(R)$ when $A$ is
  simply laced hyperbolic.  The generators are $X_i(t)$ and $S_i$ where
  $i$ varies over the nodes of the Dynkin diagram and $t$ and $u$ vary
  over $R$.  
See theorem~\ref{thm-explicit-presentation} for the 
additional relations needed to define $\G_{\!A}(R)$, and
corollary~\ref{cor-presentation-over-Z} for 
simplifications in the special case $R=\Z$.}
\label{tab-explicit-presentation}
\end{table}

\begin{theorem}[Presentation of Steinberg and Kac--Moody groups]
\label{thm-explicit-presentation}
Suppose $R$ is a commutative ring and $A$ is a simply laced hyperbolic
Dynkin diagram, with $I$ being its set of nodes.  Then the Steinberg
group
$\St_A(R)$ has
a presentation with generators $S_i$ and $X_i(t)$, with $i$ varying
over $I$ and $t$ over $R$, and relations listed in table~\ref{tab-explicit-presentation}.

The Kac--Moody group $\G_{\!A}(R)$ is the quotient of $\St_A(R)$ by the extra relations
$\htilde_i(a)\htilde_i(b)=\htilde_i(ab)$, for any single  $i\in I$ and all
units $a,b$ of $R$, where
$\htilde_i(a){}:=\stilde_i(a)\stilde_i(-1)$ and
$\stilde_i(a){}:=X_i(a) S_i X_i(1/a) S_i^{-1} X_i(a)$.
\end{theorem}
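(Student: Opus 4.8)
The plan is to prove that the two presentations define the same group by exhibiting mutually inverse homomorphisms. Let $G$ be the group presented by the generators and relations of Table~\ref{tab-explicit-presentation}. In the easy direction I would build $\phi\colon G\to\St_A(R)$ by $X_i(t)\mapsto X_{\a_i}(t)$ and $S_i\mapsto\stilde_i(1)$, where $X_{\a_i}$ is Tits' simple root group and $\stilde_i(1)\in\St_A(R)$ is his standard Weyl lift. This is legitimate once every relation of Table~\ref{tab-explicit-presentation} is seen to hold in $\St_A(R)$; but each such relation lives inside a standard rank-$1$ subgroup $\gend{X_{\a_i},S_i}$ or rank-$2$ subgroup $\gend{X_{\a_i},X_{\a_j},S_i,S_j}$, where it is either immediate from Tits' additivity and Weyl-lift relations or, for joined $i,j$, is exactly the type-$A_2$ Chevalley commutator formula and its companions---all of which are among Tits' defining relations. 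This direction is bookkeeping.

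The content is in the reverse homomorphism $\psi\colon\St_A(R)\to G$. Tits' generators include $X_\a(t)$ for \emph{every} real root $\a$, so I must first assign an element of $G$ to each. Writing $\a=w(\a_i)$ with $w$ in the Weyl group $W$ and lifting a word for $w$ to a product $\what$ of the $S_k^{\pm1}$, I would set $X_\a(t):=\what\,X_i(t)\,\what^{-1}$. The first task is well-definedness: independence of the chosen word for $w$ follows from the braid relations $S_iS_jS_i=S_jS_iS_j$ and $S_iS_j=S_jS_i$ together with the conjugation relations $X_i(t)S_jS_i=S_jS_iX_j(t)$ and $[S_i,X_j(t)]=1$, which say precisely that passing between reduced words permutes the root groups compatibly; and independence of the representation $\a=w(\a_i)$, where the residual ambiguity is a lift of the identity, i.e.\ a product of squares $S_k^2$ representing the torus elements $\htilde_k(-1)$, is absorbed by $[S_i^2,X_i(t)]=1$, $S_i^2X_j(t)S_i^{-2}=X_j(t)^{-1}$ and $S_i^2S_jS_i^{-2}=S_j^{-1}$, which record exactly how these torus elements act.

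With the $X_\a(t)$ in hand, the second and harder task is to verify all of Tits' relations in $G$. Additivity $X_\a(t)X_\a(u)=X_\a(t+u)$ transports directly from the node relation. The crux---and the step I expect to be the main obstacle---is the commutator relation for each prenilpotent pair $\{\a,\beta\}$, of which there are infinitely many $W$-orbits. Here the simply-laced hypothesis is decisive: any prenilpotent pair generates a finite dihedral, hence rank-$2$, subsystem of real roots, and the only simply-laced rank-$2$ real-root subsystems are $A_1\times A_1$ and $A_2$; in particular roots such as $2\a+\beta$ never occur, so Tits' formula collapses to either $[X_\a(t),X_\beta(u)]=1$ or a single-term $A_2$ commutator. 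The plan is to reduce, by a combinatorial lemma on prenilpotent pairs, each such pair to one lying in a standard rank-$2$ subgroup $\gend{X_{\a_i},X_{\a_j},S_i,S_j}$, so that Tits' relation becomes the $\what$-conjugate of the corresponding unjoined or joined relation of Table~\ref{tab-explicit-presentation}. The difficulty is making the transport consistent: one must check that conjugating a rank-$2$ relation lands on the correct roots with the correct signs regardless of the choices, which is exactly where the well-definedness above and the quadratic $S$-relations are consumed. A final check on generators shows $\phi$ and $\psi$ are mutually inverse, giving the presentation of $\St_A(R)$.

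For the Kac--Moody group I would start from Tits' description of $\G_{\!A}(R)$ as the quotient of $\St_A(R)$ by the multiplicativity relations $\htilde_i(a)\htilde_i(b)=\htilde_i(ab)$ imposed at every node $i$, and show that in $G$ it suffices to impose them at one node. Indeed the joined-node relation $X_i(t)S_jS_i=S_jS_iX_j(t)$ transports the multiplicativity of $\htilde_i$ across each bond of the diagram; since every hyperbolic diagram is connected, multiplicativity at a single node propagates to all nodes, yielding the stated one-node presentation of $\G_{\!A}(R)$.
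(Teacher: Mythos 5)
Your overall architecture (mutually inverse homomorphisms; defining $X_\alpha(t)$ for each real root by Weyl-conjugating the node generators and checking well-definedness against the braid and $S_i^2$-relations) is sound as far as it goes, but it only re-derives what the paper imports from \cite{Allcock-amalgams}: that Table~\ref{tab-explicit-presentation} presents the \emph{pre-Steinberg} group $\PSt_A(R)$, i.e.\ the quotient of $\freeproduct_{\alpha\in\Phi}\U_\alpha$ by the Chevalley relations for \emph{classically} prenilpotent pairs only (those with $\alpha=\beta$ or $\alpha\cdot\beta\in\{0,\pm1\}$). Your crux step, however, fails. You claim that any prenilpotent pair generates a finite dihedral rank-$2$ subsystem and can be moved, by a combinatorial lemma, into a standard rank-$2$ subgroup $\gend{X_{\a_i},X_{\a_j},S_i,S_j}$. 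This is false: by lemma~\ref{lem-prenilpotent-pairs}, distinct real roots $\alpha,\beta$ form a prenilpotent pair exactly when $\alpha\cdot\beta\geq-1$, so there are prenilpotent pairs with $\alpha\cdot\beta=k$ for every $k\geq2$ (prenilpotency only requires $(\N\alpha+\N\beta)\cap\Phi$ to be finite, not $(\Q\alpha+\Q\beta)\cap\Phi$; you have conflated the two notions). Such a pair spans a degenerate ($k=2$) or Lorentzian ($k\geq3$) rank-$2$ lattice, the reflections $w_\alpha,w_\beta$ generate an \emph{infinite} dihedral group, and since $W$ acts by isometries no element of $W$ can carry the pair into the span of two simple roots, where distinct non-opposite roots have inner product $0$ or $\pm1$. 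These pairs---infinitely many $W$-orbits of them---are precisely what makes Tits' presentation unwieldy, and they are untouched by your transport argument.

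Closing exactly this gap is the content of the paper's theorem~\ref{thm-Pst-to-St-an-isomorphism}. There one argues by induction on $k=\alpha\cdot\beta$ that the relation $[\U_\alpha,\U_\beta]=1$ for $k\geq2$ follows from the classically prenilpotent relations: one produces real roots $\alpha',\alpha''$ with $\alpha'+\alpha''=\alpha$ and $\alpha'\cdot\beta,\,\alpha''\cdot\beta>0$ (hence both less than $k$), so that $\U_\alpha=[\U_{\alpha'},\U_{\alpha''}]$ commutes with $\U_\beta$ by induction. Producing $\alpha',\alpha''$ is not bookkeeping: for $k=2$ it uses the null vector $\alpha-\beta$, transported into the fundamental chamber so that it becomes an ideal vertex with an irreducible affine subdiagram attached; for $k\geq3$ it relies on the hyperbolic distance estimate of lemma~\ref{lem-curious-property-of-facets}, whose optimal constant $\cosh^{-1}\sqrt{4/3}$ is only barely sufficient for the contradiction. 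Your final paragraph on $\G_{\!A}(R)$ (propagating multiplicativity of $\htilde_i$ from one node across bonds of the connected diagram) does match the paper's argument; but without a replacement for the missing step above, the Steinberg half of your proof does not go through.
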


Our generating set coincides with the one in \cite{Carbone-Garland},
and the presentation works just as well for the simply laced spherical
or affine Dynkin diagrams without $A_1$ components; see
\cite{Allcock-affine}.  When $R=\Z$ the presentation simplifies
considerably.  We give it explicitly because this entire paper grew
from trying to understand the Kac--Moody group~$\G_{\!E_{10}}(\Z)$:

\begin{corollary}[Presentation over $\Z$]
\label{cor-presentation-over-Z}
If $A$ is simply laced hyperbolic, then the Steinberg group
$\St_A(\Z)$ has a presentation with generators $S_i$ and $X_i$, where
$i$ varies over the simple roots, and the relations listed in~Table~\ref{tab-explicit-presentation-Z}.  The Kac--Moody group
$\G_{\!A}(\Z)$ is the quotient of $\St_A(\Z)$ by the relation
$\htilde_i(-1)^2=1$, for any  single $i\in I$. \qed
\end{corollary}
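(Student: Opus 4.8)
The plan is to specialize the presentation of Theorem~\ref{thm-explicit-presentation} to $R=\Z$ and exploit the fact that $\Z$ is generated as an abelian group by~$1$. Writing $X_i:=X_i(1)$, the additivity relation $X_i(t)X_i(u)=X_i(t+u)$ lets me define $X_i(t)=X_i^{\,t}$ for every $t\in\Z$ (with $X_i(0)=1$), and once this is taken as a definition that relation becomes vacuous. So the first step is to rewrite each relation of Table~\ref{tab-explicit-presentation} in terms of the single generators $X_i$ and $S_i$, and to check that the whole $\Z$-indexed family of relations is a consequence of the finitely many instances recorded in Table~\ref{tab-explicit-presentation-Z}.

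For every relation of Table~\ref{tab-explicit-presentation} except the last, the reduction is immediate because the relation merely asserts that two elements commute, or that conjugation by some word carries one generator to a prescribed word, and both properties automatically pass to integer powers. For instance $[S_i^2,X_i(t)]=1$ and $S_i^2X_j(t)S_i^{-2}=X_j(t)^{-1}$ follow for all $t$ from their $t=1$ instances $[S_i^2,X_i]=1$ and $S_i^2X_jS_i^{-2}=X_j^{-1}$, since conjugation is an automorphism; likewise the commuting relations $[S_i,X_j(t)]=1$, $[X_i(t),X_j(u)]=1$ and $[X_i(t),S_iX_j(u)S_i^{-1}]=1$, and the conjugation relation $X_i(t)S_jS_i=S_jS_iX_j(t)$, all follow from their $t=u=1$ instances. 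The relations carrying no ring parameter (the $S_i$-only relations and the third relation defining $S_i$) are carried over unchanged.

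The one genuinely nonlinear relation, and the main obstacle, is the Chevalley commutator $[X_i(t),X_j(u)]=S_iX_j(tu)S_i^{-1}$, which after substitution reads $[X_i^{\,t},X_j^{\,u}]=Y^{tu}$ with $Y:=S_iX_jS_i^{-1}$; its bilinear dependence on $(t,u)$ does not follow formally from the single instance $[X_i,X_j]=Y$. To derive it I would first show that $Y$ is central in $\langle X_i,X_j\rangle$. The instance $[X_i,S_iX_jS_i^{-1}]=1$ gives $[X_i,Y]=1$ directly. Reading the $t=u=1$ Chevalley relation with the roles of $i$ and $j$ reversed gives $[X_j,X_i]=S_jX_iS_j^{-1}$; comparing with $[X_j,X_i]=[X_i,X_j]^{-1}=Y^{-1}$ identifies $S_jX_iS_j^{-1}=Y^{-1}$, so the reversed commuting relation $[X_j,S_jX_iS_j^{-1}]=1$ becomes $[X_j,Y^{-1}]=1$, i.e.\ $[X_j,Y]=1$. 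With $Y=[X_i,X_j]$ central in $\langle X_i,X_j\rangle$, the standard commutator-calculus identity $[X_i^{\,t},X_j^{\,u}]=[X_i,X_j]^{tu}$, proved by induction on $|t|$ and then on $|u|$, yields the full family of Chevalley commutators. This shows that the relations of Table~\ref{tab-explicit-presentation-Z} imply all relations of Table~\ref{tab-explicit-presentation}; the converse is trivial, since each relation of Table~\ref{tab-explicit-presentation-Z} is an instance of one in Table~\ref{tab-explicit-presentation}. Hence the two presentations define the same group $\St_A(\Z)$.

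Finally, for the Kac--Moody quotient I would use that the only units of $\Z$ are $\pm1$, so the family $\htilde_i(a)\htilde_i(b)=\htilde_i(ab)$ of Theorem~\ref{thm-explicit-presentation} reduces to the single nontrivial instance $\htilde_i(-1)\htilde_i(-1)=\htilde_i(1)$. The defining relation $S_i=X_iS_iX_iS_i^{-1}X_i$ says exactly that $\stilde_i(1)=S_i$, and taking inverses gives $\stilde_i(-1)=S_i^{-1}$; hence $\htilde_i(1)=\stilde_i(1)\stilde_i(-1)=S_iS_i^{-1}=1$. Therefore the Kac--Moody relation collapses to $\htilde_i(-1)^2=1$ for the chosen $i$, completing the identification of $\G_{\!A}(\Z)$.
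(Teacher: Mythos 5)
Your proposal is correct and takes essentially the same route as the paper, which states the corollary as an immediate consequence of Theorem~\ref{thm-explicit-presentation} under the substitution $X_i(t)=X_i^{\,t}$ (this is exactly the correspondence noted in the remark following the corollary). Your write-up merely supplies the details the paper leaves implicit---most usefully the centrality of $[X_i,X_j]$ needed to recover the bilinear Chevalley relation from its $t=u=1$ instance, and the verification that $\htilde_i(1)=1$ already holds in $\St_A(\Z)$, which collapses the unit relations to the single relation $\htilde_i(-1)^2=1$.
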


\begin{table}
\begin{align*}
\left.
\begin{aligned}
\FOOalign{[S_i^2,X_i]}{1}\\
\FOOalign{S_i}{X_i S_i X_i S_i^{-1} X_i}
\end{aligned}
\right\}\FOOtag{all $i$}
\\
\noalign{\medskip}
\left.
\begin{aligned}
\FOOalign{S_i S_j}{S_j S_i}\\
\FOOalign{[S_i,X_j]}{1}\\
\FOOalign{[X_i,X_j]}{1}
\end{aligned}
\right\}
\FOOtag{all unjoined $i\neq j$}
\\
\noalign{\medskip}
\left.
\begin{aligned}
\FOOalign{S_i S_j S_i}{S_j S_i S_j}\\
\FOOalign{S_i^2 S_j S_i^{-2}}{S_j^{-1}}\\
\FOOalign{X_i S_j S_i}{S_j S_i X_j}\\
\FOOalign{S_i^2 X_j S_i^{-2}}{X_j^{-1}}\\
\FOOalign{[X_i,S_i X_j S_i^{-1}]}{1}\\
\FOOalign{[X_i,X_j]}{S_i X_j S_i^{-1}}
\end{aligned}
\right\}
\FOOtag{all joined $i\neq j$}
\end{align*}
\caption{Defining relations for $\St_A(\Z)$ when $A$ is
  simply laced hyperbolic; see corollary~\ref{cor-presentation-over-Z}.}
\label{tab-explicit-presentation-Z}
\end{table}

\begin{remark}
$X_i(u)$ in theorem~\ref{thm-explicit-presentation} corresponds to
  $X_i^u$ here; in particular $X_i=X_i(1)$.  Also, one
  can show that $\htilde_i(-1)=S_i^{-2}$ in $\St_A$.
  So one could rewrite the relation $\htilde_i(-1)^2=1$ as $S_i^4=1$.
\end{remark}

The next result follows from the evident fact that each relation
in table~\ref{tab-explicit-presentation} involves at most $2$ subscripts.  If $R$ is a field then the
$\G_{\!A}$ case is a special case of a result of Abramenko--M\"uhlherr
\cite{Abramenko-Muhlherr}\cite{Muhlherr}; see also \cite{Caprace}.

\begin{corollary}[Curtis--Tits property of the presentation]
\label{cor-Steinberg-as-direct-limit}
Let $R$ be a commutative ring and $A$ a simply laced hyperbolic Dynkin
diagram.  Consider the Steinberg groups $\St_B(R)$ and the obvious maps between
them, as $B$ varies over the singletons and pairs of nodes of $A$.
The direct limit of this family of groups equals the Steinberg group $\St_A(R)$.
The same result holds with $\G_{\!A}$ in place of $\St_A$ throughout.
\qed
\end{corollary}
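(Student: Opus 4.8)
The plan is to deduce this directly from the presentation in Theorem~\ref{thm-explicit-presentation}, exploiting the fact that every relation in Table~\ref{tab-explicit-presentation} involves at most two of the subscripts $i\in I$. Concretely, I would verify that $\St_A(R)$, equipped with the natural homomorphisms $f_B\colon\St_B(R)\to\St_A(R)$, satisfies the universal property of the direct limit over the poset $\mathcal B$ of those $B\sset I$ with $|B|\le2$, ordered by inclusion. For $B\sset B'$ the structure map $\St_B(R)\to\St_{B'}(R)$, and the map $f_B$ itself, send each $X_i(t)$ and $S_i$ to the generator of the same name; these exist and are compatible by the functoriality of Tits' construction under passage to a sub-root-system, since a prenilpotent pair of roots in the subdiagram $B$ remains prenilpotent in $A$ with the same commutator relation.

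The three facts I would record about the low-rank pieces are: (i) each $\St_B(R)$ is generated by the elements $X_i(t)$ and $S_i$ with $i\in B$; (ii) within $\St_B(R)$ these generators satisfy exactly the relations of Table~\ref{tab-explicit-presentation} whose subscripts lie in $B$; and (iii) $f_{\{i\}}$ carries the rank-one generators $X_i(t),S_i$ to the like-named generators of $\St_A(R)$. Granting these, for any group $H$ and any compatible family $\psi_B\colon\St_B(R)\to H$ I define $\psi\colon\St_A(R)\to H$ on generators by $\psi(X_i(t))=\psi_{\{i\}}(X_i(t))$ and $\psi(S_i)=\psi_{\{i\}}(S_i)$, using the presentation of Theorem~\ref{thm-explicit-presentation}. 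Each defining relation of $\St_A(R)$ has all its subscripts in some $B\in\mathcal B$; by (ii) that relation already holds among the corresponding elements of $\St_B(R)$, so $\psi_B$ sends it to a valid identity in $H$ and $\psi$ is well defined. Uniqueness of $\psi$ is clear because the $X_i(t),S_i$ generate $\St_A(R)$, while the identities $\psi f_B=\psi_B$ follow by evaluating both sides on the generators of $\St_B(R)$ furnished by~(i); hence $\St_A(R)=\varinjlim_B\St_B(R)$.

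The $\G$ case is identical once I observe that the extra relations $\htilde_i(a)\htilde_i(b)=\htilde_i(ab)$ involve only the single subscript $i$, since $\htilde_i$ and $\stilde_i$ are words in $X_i(\cdot)$ and $S_i$. Thus $\G_{\!A}(R)$ admits a presentation all of whose relations involve at most two subscripts, namely the relations of Table~\ref{tab-explicit-presentation} together with these torus relations imposed for every $i\in I$; imposing them for every $i$ rather than one is harmless because by Theorem~\ref{thm-explicit-presentation} the single-subscript quotient is already Tits' Kac--Moody group, in which all of these relations hold, and on restriction to $B$ this presentation yields $\G_{\!B}(R)$. The same universal-property computation then gives $\G_{\!A}(R)=\varinjlim_B\G_{\!B}(R)$.

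The only real content lies in facts (i)--(iii), i.e.\ the rank-one and rank-two verifications: the $\SL_2$-type relations for a singleton $B$ and, for a joined pair, the braid relation together with the $A_2$ commutator formula $[X_i(t),X_j(u)]=S_iX_j(tu)S_i^{-1}$. These are exactly the relation-checks needed to make the presentation map of Theorem~\ref{thm-explicit-presentation} well defined, so they require no new work. I would emphasize that this route needs only that the listed relations \emph{hold} in each $\St_B(R)$ and that the $\St_B(R)$ are \emph{generated} by the named elements---never that the restricted relations \emph{suffice} to present $\St_B(R)$. That is what lets the argument sidestep the delicate behaviour of $A_1$ components noted after Theorem~\ref{thm-explicit-presentation}, which would otherwise intrude through the singleton and unjoined-pair subdiagrams.
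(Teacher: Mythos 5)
Your overall frame---verifying the universal property of the direct limit against the Table~\ref{tab-explicit-presentation} presentation, using the fact that every relation involves at most two subscripts---is exactly what the paper's one-sentence proof intends. But your facts (i) and (ii), which you assert for \emph{every} $B\in\mathcal B$ and describe as requiring ``no new work,'' are false precisely for the singletons and the unjoined pairs, and your closing paragraph has the logic backwards: the ``delicate behaviour of $A_1$ components'' consists exactly in the failure of ``hold'' and ``generate,'' not merely of ``suffice to present.'' By Tits' definition, $\St_{\{i\}}(R)$ is the free product $\U_{\alpha_i}\freeproduct\U_{-\alpha_i}\iso R\freeproduct R$, since $\{\alpha_i,-\alpha_i\}$ is not a prenilpotent pair and so no Chevalley relation ties the two root groups together. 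Writing $X_\pm$ for $X_{\pm\alpha_i}$ and $S_i=X_+(1)X_-(-1)X_+(1)$, one computes in this free product that
$$
S_iX_+(1)S_i^{-1}=X_+(1)X_-(-1)X_+(1)X_-(1)X_+(-1),
$$
a reduced word of length five, which is therefore not equal to $X_-(-1)$; hence the relation $S_i=X_i(1)S_iX_i(1)S_i^{-1}X_i(1)$ of Table~\ref{tab-explicit-presentation} \emph{fails} in $\St_{\{i\}}(R)$ for every nonzero ring $R$, so (ii) is false. Likewise $\generatedby{X_i(t),S_i}=\U_{\alpha_i}\freeproduct\generatedby{X_-(-1)}$ is a proper subgroup of $\St_{\{i\}}(R)$ whenever the additive group of $R$ is not generated by $1$ (e.g.\ $R=\F_4$, $\Q$, $\Z[x]$), so (i) is false; both failures propagate to unjoined pairs, where $\St_{\{i,j\}}(R)\iso(R\freeproduct R)\times(R\freeproduct R)$. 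Your appeal to ``the relation-checks needed to make the presentation map of Theorem~\ref{thm-explicit-presentation} well defined'' conflates two different things: those checks take place inside $\St_A(R)$, where every node has a joined neighbour, and they do not descend to the rank-one subgroups. As written, your well-definedness argument for the single-subscript relations (sourced from $\psi_{\{i\}}$) and your verification of $\psi f_B=\psi_B$ (which needs all generators of $\St_B(R)$, including $X_{-\alpha_i}(t)$) both break down.

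The corollary is nonetheless true, and the repair is the real content hiding behind the paper's ``evident fact.'' Since hyperbolic diagrams are connected with at least two nodes, every $i$ has a joined neighbour $j$, and in $\St_{\{i,j\}}(R)$ (type $A_2$, rank two) Steinberg's derived relations for rank~$\geq2$ do hold: $S_iX_{\alpha_i}(t)S_i^{-1}=X_{-\alpha_i}(-t)$, hence both single-subscript relations of Table~\ref{tab-explicit-presentation}, and hence also the image of $\St_{\{i\}}(R)$ in $\St_{\{i,j\}}(R)$ lies in the subgroup generated by the $X_i(t)$ and $S_i$ there. So you must route through joined pairs: check each single-subscript relation in $H$ via $\psi_{\{i,j\}}$ rather than $\psi_{\{i\}}$, using the compatibility $\psi_{\{i\}}=\psi_{\{i,j\}}\circ\bigl(\St_{\{i\}}(R)\to\St_{\{i,j\}}(R)\bigr)$; and check $\psi f_{\{i\}}=\psi_{\{i\}}$ on the actual generators $X_{\pm\alpha_i}(t)$ of $\St_{\{i\}}(R)$ by writing $X_{-\alpha_i}(t)=S_iX_i(-t)S_i^{-1}$, an identity valid in $\St_{\{i,j\}}(R)$ and in $\St_A(R)$ but not in $\St_{\{i\}}(R)$. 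With those changes your universal-property computation goes through, and the same adjustments are needed in your $\G$ paragraph, where the rank-one quotients $\G_{\{i\}}(R)$ present the identical difficulty.
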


As one might expect, this result allows one to deduce
finite-pre\-sen\-ta\-tion results about $\St_A(R)$ from similar
results about the groups $\St_B(R)$.  The following theorem follows
immediately from theorem~\ref{thm-Pst-to-St-an-isomorphism} in the current paper (a restatement of
theorem~\ref{thm-explicit-presentation}) and Thm.~1.4 of~\cite{Allcock-amalgams}.  Of course,
any finite presentation result will require some hypothesis on~$R$.
But conceptually one might think of the presentation in table~\ref{tab-explicit-presentation}
as ``finite over~$R$'' for any commutative ring~$R$.  By this we mean
that the generators and relations have finitely many forms,
with some of the forms being parameterized by elements
of~$R$ (or pairs of elements).


\begin{theorem}[Finite presentation]
\label{thm-finite-presentation-of-Steinberg}
In the setting of corollary~\ref{cor-Steinberg-as-direct-limit},
$\St_A(R)$ is fi\-ni\-tely presented as a group if either
\begin{enumerate}
\item
\label{item-module-finite-over-suitable-subring}
\leavevmode
$R$ is finitely generated as a module over some subring
generated by finitely many units, or
\item
\label{item-rk-3-and-finitely-generated-ring}
\leavevmode
\hbox{$\rk A>4$} and $R$ is finitely generated as a ring.
\end{enumerate}
In either case, if the unit group of $R$ is finitely generated as an
abelian group, then $\G_{\!A}(R)$ is also finitely presented as a group.
\qed
\end{theorem}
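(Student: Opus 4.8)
The plan is to reduce the entire statement to the amalgam description of $\St_A(R)$ and then feed it into an abstract finite-presentation criterion. First I would invoke theorem~\ref{thm-Pst-to-St-an-isomorphism}, or equivalently the Curtis--Tits property of corollary~\ref{cor-Steinberg-as-direct-limit}, to realize $\St_A(R)$ as the colimit of the finite diagram of groups $\St_B(R)$, where $B$ ranges over the nodes and pairs of nodes of $A$ and the structure maps are induced by inclusions of subdiagrams. Because $A$ has finitely many nodes, this is a system indexed by a finite object, with vertex groups the rank-one groups $\St_{\{i\}}(R)$ and edge groups the rank-two groups $\St_{\{i,j\}}(R)$. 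The whole theorem then becomes an instance of a general statement about when such a colimit is finitely presented, which is exactly what Theorem~1.4 of \cite{Allcock-amalgams} provides; granting that result, the proof is essentially a matter of checking that its hypotheses are met.

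The substance is in matching the two hypotheses (i) and (ii) to that criterion. For joined pairs the edge groups are the rank-two Steinberg groups $\St_{A_2}(R)$ (of $\SL_3$-type), for unjoined pairs they are products of two rank-one ($\SL_2$-type) groups, and the vertex groups are the rank-one groups. Under hypothesis (i)---$R$ module-finite over a subring generated by finitely many units---all of these small Steinberg groups are themselves finitely presented, by the classical finite-presentation results for $\SL_2$- and $\SL_3$-type groups over such rings, and the abstract criterion then assembles them into a finite presentation of the colimit. Under hypothesis (ii) the rank-one (and possibly rank-two) pieces can fail to be finitely presented when $R$ is merely finitely generated as a ring, since $\SL_2$-type groups over polynomial rings need not be finitely presented; here the condition $\rk A>4$ is what rescues the argument, guaranteeing that $A$ contains enough overlapping subdiagrams of type $A_3,A_4,\dots$, whose Steinberg groups are finitely presented over any finitely generated ring. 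The cited amalgam theorem is precisely designed to deduce finite presentation of the colimit from finite presentation of these higher-rank pieces rather than from the individual edges.

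For the Kac--Moody groups I would use that, by theorem~\ref{thm-explicit-presentation}, $\G_{\!A}(R)$ is the quotient of $\St_A(R)$ by the relations $\htilde_i(a)\htilde_i(b)=\htilde_i(ab)$ for a single fixed $i$ and all units $a,b\in\Runits$. When $\Runits$ is finitely generated as an abelian group, the assignment $a\mapsto\htilde_i(a)$ is forced to be a homomorphism once it is multiplicative on a finite generating set of $\Runits$, so only finitely many of these relations are needed. Thus $\G_{\!A}(R)$ is the quotient of the finitely presented group $\St_A(R)$ by finitely many further relations, and hence is itself finitely presented.

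The hard part, and the place where the rank hypothesis is unavoidable, is verifying the input facts in case (ii): confirming that the hyperbolic diagrams of table~\ref{tab-diagrams} actually contain the overlapping higher-rank $A_n$ subconfigurations that Theorem~1.4 of \cite{Allcock-amalgams} requires, and that these occur for exactly the diagrams of rank greater than $4$. The rank-four diagrams are the borderline: they are too small to furnish the needed higher-rank subconfigurations, which is precisely why they are excluded from (ii) and must instead satisfy the stronger module-finiteness hypothesis (i). Making this dichotomy precise---checking case by case against the table that the amalgam theorem's hypotheses hold under exactly the stated conditions on $R$---is the step I expect to absorb essentially all of the effort, the rest being a direct combination of the two quoted results.
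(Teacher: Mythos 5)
Your overall route is the paper's own: the paper proves this theorem by pure citation, combining theorem~\ref{thm-Pst-to-St-an-isomorphism} (equivalently, the direct-limit description of corollary~\ref{cor-Steinberg-as-direct-limit}) with Thm.~1.4 of \cite{Allcock-amalgams}, exactly as in your first paragraph; your account of how hypotheses \theenumi-style (i) and (ii) feed into that theorem (Splitthoff-type finite presentability of the rank~$\le2$ pieces under (i), reliance on rank~$3$ subdiagrams under (ii)) is consistent with the mechanism of the cited result. However, your handling of the Kac--Moody statement has a genuine gap. You assert that $a\mapsto\htilde_i(a)$ is ``forced to be a homomorphism once it is multiplicative on a finite generating set of $\Runits$'', so that finitely many of Tits' relations suffice. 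That implication is false as a general principle and is not formal here: writing $c(a,b):=\htilde_i(a)\htilde_i(b)\htilde_i(ab)^{-1}$, imposing $c(a_k,a_l)=1$ for generators $a_k,a_l$ of $\Runits$ kills all $c(a,b)$ only if one already knows the symbols $c(a,b)$ are central in $\St_A(R)$ and bimultiplicative in their arguments---nontrivial structural facts of Matsumoto/Morita--Rehmann type. (Concretely, $\htilde_i(a_1^2a_2)=\htilde_i(a_1)^2\htilde_i(a_2)$ does not follow from multiplicativity on pairs of generators alone.) In the paper this step is not argued; it is part of what Thm.~1.4 of \cite{Allcock-amalgams} asserts, so the repair is either to cite that theorem for the $\G_{\!A}$ statement as well, or to supply the centrality and bimultiplicativity of the symbols.

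A smaller inaccuracy: the rank-$4$ diagrams are not excluded from (ii) because they are ``too small to furnish the needed higher-rank subconfigurations''. The actual obstruction is that all three rank-$4$ simply laced hyperbolic diagrams contain triangles, hence have edges lying in no $A_3$ subdiagram (the complete graph on four nodes contains no $A_3$ whatsoever), whereas every diagram of rank $>4$ in table~\ref{tab-diagrams} is triangle-free, so each of its edges does lie in an $A_3$. The dichotomy tracks triangles rather than rank as such; it merely coincides with ``$\rk A>4$'' within this family. This does not invalidate your argument for ranks $>4$, but it is the real content of the case check you defer to the end, and is worth stating correctly since it is exactly what the diagram hypothesis of the cited amalgam theorem detects.
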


Many mathematicians have worked on the question of whether
$S$-arithmetic groups in algebraic groups over adele rings are
finitely presented.  This was finally resolved in all cases by Behr
\cite{Behr-number-field-case}, \cite{Behr-function-field-case}.  Since
Kac--Moody groups are infinite-dimensional analogues of algebraic
groups, it is natural to ask whether their ``$S$-arithmetic groups''
are finitely presented.  The following result answers this, at least
insofar as $\G_{\!A}$ is an analogue of an algebraic group.  It is an
immediate application of
theorem~\ref{thm-finite-presentation-of-Steinberg}.

\begin{corollary}[Finite presentation in arithmetic contexts]
\label{cor-Kac-Moody-finite-presentation}
Suppose $K$ is a global field, meaning a finite extension of $\Q$ or
$\F_q(t)$.  Suppose $S$ is a nonempty finite set of places of $K$,
including all infinite places in the number field case.  Let $R$ be
the ring of $S$-integers in $K$.

Suppose $A$ is a simply laced hyperbolic Dynkin diagram.
Then 
$\G_{\!A}(R)$ 
and
$\St_A(R)$ 
are finitely presented, except perhaps in the case that
$\rk A=4$, $K$ is a function field and $|S|=1$.
\qed
\end{corollary}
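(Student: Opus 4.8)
The plan is to deduce this from theorem~\ref{thm-finite-presentation-of-Steinberg} by checking its hypotheses for $R=\mathcal O_{K,S}$. Two standard facts about rings of $S$-integers underlie everything. First, $R$ is finitely generated as a ring: in the number field case $\mathcal O_K$ is a finite $\Z$-module and $R$ is obtained from it by inverting finitely many elements, while in the function field case $R$ is the coordinate ring of the affine curve $C\setminus S$, a finitely generated algebra over the field of constants $\F_{q'}$. Second, $R^\times$ is finitely generated as an abelian group, by the Dirichlet $S$-unit theorem (in the function field case $R^\times\cong\F_{q'}^\times\times\Z^{|S|-1}$). The second fact supplies exactly the hypothesis needed to pass from $\St_A(R)$ to $\G_{\!A}(R)$, so it suffices to treat $\St_A$.

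When $\rk A>4$ there is nothing to do: hypothesis~(\ref{item-rk-3-and-finitely-generated-ring}) holds because $R$ is a finitely generated ring. All the work is therefore in the remaining case $\rk A=4$ (the minimal rank in table~\ref{tab-diagrams}), where I must verify hypothesis~(\ref{item-module-finite-over-suitable-subring}): that $R$ is a finite module over a subring generated by finitely many units. In the number field case, let $\mathfrak p_1,\dots,\mathfrak p_k$ be the finite places in $S$, and choose $h$ with $(\mathfrak p_1\cdots\mathfrak p_k)^h=(\alpha)$ principal; then $\alpha$ is an $S$-unit and $\mathcal O_{K,S}=\mathcal O_K[\alpha^{-1}]$. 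Since $\mathcal O_K$ is a finite $\Z$-module, $R=\mathcal O_K[\alpha^{-1}]$ is a finite module over $R_0:=\Z[\alpha,\alpha^{-1}]$, which is generated by the units $\alpha,\alpha^{-1}$; this gives~(\ref{item-module-finite-over-suitable-subring}). (If $S$ has no finite place, then $R=\mathcal O_K$ is already finite over $\Z$.)

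In the function field case with $\rk A=4$ the answer depends on $|S|$, which is the source of the stated exception. If $|S|=1$, then every $S$-unit has divisor supported at the single place of $S$ and of degree zero, hence is a constant; so $R^\times=\F_{q'}^\times$, and any subring of $R$ generated by finitely many units is a finite field, over which the infinite ring $R$ cannot be a finite module. Thus neither hypothesis of theorem~\ref{thm-finite-presentation-of-Steinberg} can be arranged, and this is exactly the excluded case. If instead $|S|\ge2$, the image of $R^\times$ under the valuations at the places of $S$ is a full-rank lattice in the degree-zero hyperplane of $\Z^S$, so for each $v\in S$ some $S$-unit has nonzero valuation at $v$. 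Picking finitely many $S$-units $u_1,\dots,u_m$ realizing this at every place of $S$, set $R_0:=\F_{q'}[u_1^{\pm1},\dots,u_m^{\pm1}]$, generated by finitely many units. A place $v$ of $K$ satisfies $v(u_j)=0$ for all $j$ precisely when $v\notin S$, so the integral closure of $R_0$ in $K$ equals $\{x\in K:\text{$v(x)\ge0$ for all $v\notin S$}\}=R$; by finiteness of normalization for finitely generated domains over a field, $R$ is a finite $R_0$-module, giving~(\ref{item-module-finite-over-suitable-subring}).

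Combining the cases, hypothesis~(\ref{item-rk-3-and-finitely-generated-ring}) or~(\ref{item-module-finite-over-suitable-subring}) holds whenever we are outside the excluded case, so $\St_A(R)$ is finitely presented there, and then $\G_{\!A}(R)$ is too because $R^\times$ is finitely generated. The main obstacle is the rank-$4$ function-field analysis: recognizing that module-finiteness over a unit-generated subring forces the chosen units to have nonzero valuation at every place of $S$, which is possible exactly when $|S|\ge2$, and then identifying $R$ as the normalization of the explicit subring $R_0$.
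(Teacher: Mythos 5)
Your proposal is correct and follows exactly the route the paper intends: the paper offers no separate argument, declaring the corollary an immediate application of theorem~\ref{thm-finite-presentation-of-Steinberg}, and your proof is precisely the verification of that theorem's hypotheses---hypothesis \eqref{item-rk-3-and-finitely-generated-ring} for $\rk A>4$, hypothesis \eqref{item-module-finite-over-suitable-subring} for $\rk A=4$ (via $R=\mathcal{O}_K[\alpha^{-1}]$ finite over $\Z[\alpha,\alpha^{-1}]$ in the number field case, and via finiteness of normalization over $\F_{q'}[u_1^{\pm1},\dots,u_m^{\pm1}]$ when $K$ is a function field with $|S|\geq2$), plus the Dirichlet $S$-unit theorem to pass to $\G_{\!A}(R)$. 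Your analysis of why both hypotheses fail when $\rk A=4$, $K$ is a function field and $|S|=1$ correctly accounts for the stated exception.
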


Higher-dimensional finiteness properties are also very interesting.
Their analysis for most $S$-arithmetic subgroups of algebraic groups
has recently been completed by Bux, K\"ohl and Witzel
\cite{Bux-et-al}.  One should be able to combine their results with
corollary~\ref{cor-Steinberg-as-direct-limit} to obtain
higher-dimensional finiteness properties in the setting of
corollary~\ref{cor-Kac-Moody-finite-presentation}.  

\medskip
A major motivation for this work came from the conjectural appearance
of integral forms of hyperbolic Kac--Moody groups as symmetries of
supergravity and superstring theories \cite{Allcock-Carbone}.  $E_{10}$ is the ``overextended'' version of
$E_8$, and the corresponding overextended versions of $E_6$ and $E_7$
also appear in table~\ref{tab-diagrams}.  Hull and Townsend
conjectured that $\G_{\!E_{10}}(\mathbb{Z})$ is  the discrete
``U-duality'' group of Type II superstring theories \cite{Hull-Townsend}.  And by analogy with $SL_2(\mathbb{Z})$, Damour and Nicolai
conjectured that $\G_{\!E_{10}}(\mathbb{Z})$ is the ``modular group'' for
certain automorphic forms that are expected to arise in the context of
$11$-dimensional supergravity \cite{Damour-Nicolai}.   
The role of $E_{10}$ in the physics conjectures is somewhat mysterious and not well understood. We began this work by
pondering how to give a ``workable'' definition of  $\G_{\!E_{10}}(\Z)$.   We hope that our explicit
finite presentation will provide insight into these conjectures.

Our other major motivation was to bring Kac--Moody groups into the
world of geometric and combinatorial group theory, leading to
many new open questions such as those raised in \cite{Allcock-affine}.

\medskip
The methods of this paper use hyperbolic geometry in an essential way.
In particular, the proof of theorem~\ref{thm-Pst-to-St-an-isomorphism}
relies on distance estimates in hyperbolic space.  Therefore our
proofs do not extend to general Kac--Moody groups.  The simply-laced
hypothesis could probably be removed at the cost of additional
hypotheses on~$R$, since double and triple bonds are known to cause
complications over~$\F_2$.  See \cite{Abramenko-Muhlherr} for this, in
particular for the suggestion that a Kac--Moody group over $\F_2$
might fail to be finitely presented when all the nodes of its Dynkin
diagram are joined to each other by double bonds.  It is not clear yet
how well the results of this paper will extend to the general case.
But the first author has been able treat some Kac--Moody groups beyond
the hyperbolic cases of this paper.  These results will appear
separately.

\medskip
The first author is very grateful to the Japan Society for the
Promotion of Science. Both authors are very grateful to Kyoto
University for its support and hospitality.

\section{The Curtis--Tits presentation for Steinberg groups}
\label{sec-Curtis-Tits-presentation}

\noindent
We fix a simply laced hyperbolic Dynkin diagram~$A$
and a commutative ring~$R$.  We will briefly review Tits' definition of
$\St_A(R)$, recall the ``pre-Steinberg group'' $\PSt_A$ from
\cite{Allcock-amalgams}, prove that $\PSt_A\to\St_A$ is an isomorphism (theorem~\ref{thm-Pst-to-St-an-isomorphism}), and deduce
theorem~\ref{thm-explicit-presentation}.

Regarding $A$ as a generalized Cartan matrix, it is symmetric.  So we
may regard it as the inner product matrix of the simple roots
and then extend linearly to the root lattice~$\L$.  We indicate this
inner product by ``$\cdot$''.  By the {\it norm}, $\alpha^2$, of an element $\alpha\in\L$,
  we mean $\alpha\cdot\alpha$.  The simple roots $\alpha_i$ have
norm~$2$,  and reflections $w_{\alpha_i}$ in the $\alpha_i$ are isometries of $\L$.  The Weyl group $W$
is the group generated by the $w_{\alpha_i}$.  The $W\!$-images of the simple roots are
called real roots, and we write $\Phi$ for the set of
them. Since the inner product is $W\!$-invariant, all real roots have norm~$2$.

The group $\St_{A}(R)$ is defined as a certain quotient of the free
product $\freeproduct_{\alpha\in\Phi}\U_\alpha$, where $\U_\alpha$ is
a copy of the additive group of $R$.  A standard difficulty in Lie
theory is that it is impossible to distinguish a single ``best''
isomorphism $\U_\alpha\iso R$.  Instead, there is a natural pair of
parameterization $R\to\U_{\alpha}$, differing by inversion.  (Tits
refers to the ``double bases'' of the root spaces
\cite[\S3.3]{Tits}.)  For each $\alpha\in\Phi$ we fix one of these
isomorphisms and call it $X_\alpha$, so the $X_\alpha(t)$ with $t\in
R$ are the elements of $\U_\alpha$, with the obvious group operation.

The sign in lemma~\ref{lem-prenilpotent-pairs} below depends on this
choice, but only in a way that won't affect any of our arguments.  We
remark also that our presentation in
table~\ref{tab-explicit-presentation} does not involve a choice of
$X_\alpha$ for every $\alpha\in\Phi$.  We made such a choice only for
the simple roots $\alpha_i$.  This choice for the simple roots does not
distinguish any ``natural'' choices for the other roots.  For example,
if $i$ and $j$ are  joined, then $t\mapsto S_i X_j(t) S_i^{-1}$ and
$t\mapsto S_j X_i(t) S_j^{-1}$ are the two possibilities for
$X_{\alpha_i+\alpha_j}$, which by symmetry are equally preferable.
Happily, we do not need our choices to be natural in any way: one may choose
the $X_{\alpha\in\Phi}$ arbitrarily.

Describing the relations requires a preliminary definition.  Let
$(\alpha,\beta)$ be a pair of real roots. Then $(\alpha,\beta)$ is
called a {\it prenilpotent pair}  if  there exist $w,\ w'\in W$ such that
$$w\alpha, \ w\beta\in\Phi_+{\text{ and }}w'\alpha, \ w'\beta\in\Phi_-.$$
One can show that a pair of real roots $\{\alpha,\beta\}$ is prenilpotent if and only if $\alpha\neq -\beta$ and 
$$(\mathbb{N} \alpha + \mathbb{N} \beta )\cap \Phi$$ is a finite set (see for example \cite{CR}).

The relations in $\St_A(R)$ are the
following: for each prenilpotent pair $\alpha,\beta$ and each pair
$t,u\in R$, there is a relation 
\begin{equation}
\label{eq-Chevalley-relation}
[X_\alpha(t),X_\beta(u)]=\prod_{\gamma\in\Phi\cap(\N\alpha+\N\beta)-\{\alpha,\beta\}}X_\gamma(v),
\end{equation}
where the $v$'s on the right side depend on 
$\alpha,\beta,t,u$, the ordering on the $\gamma$'s, and the chosen
isomorphisms from $R$ to  
$\U_\alpha$, $\U_\beta$ and the $\U_\gamma$'s.
(A consequence of prenilpotency is that the product has only finitely
many factors.)
The following lemma describes the prenilpotent pairs in our situation,
and makes these relations explicit.  $\St_A(R)$ is the quotient of
$\freeproduct_{\alpha\in\Phi}\U_\alpha$ by all of these relations.

\begin{lemma}
\label{lem-prenilpotent-pairs}
Suppose $A$ is simply laced and hyperbolic.  Then
distinct real roots $\alpha,\beta\in\Phi$ form a prenilpotent pair just if
$\alpha\cdot\beta\geq-1$.  The corresponding relations in $\St_A(R)$
are
\begin{equation*}
[X_\alpha(t),X_\beta(u)]
=
\begin{cases}
X_{\alpha+\beta}(\pm t u)
&
\hbox{\rm if $\alpha\cdot\beta=-1$}
\\
1
&
\hbox{\rm otherwise}
\end{cases}
\end{equation*}
for all $t,u\in R$.  
\end{lemma}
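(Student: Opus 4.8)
The plan is to reduce the whole statement to the arithmetic of the single integer $n:=\alpha\cdot\beta$, using that every real root has norm~$2$ and that for $m,p\ge 0$ one has $(m\alpha+p\beta)^2=2(m^2+nmp+p^2)$. Since $\Phi$ consists of norm-$2$ vectors, the set $(\N\alpha+\N\beta)\cap\Phi$ is \emph{contained} in the set of $(m,p)$ with $m,p\ge0$ and $m^2+nmp+p^2=1$. I would first invoke the stated criterion that $\{\alpha,\beta\}$ is prenilpotent exactly when $\alpha\neq-\beta$ and $(\N\alpha+\N\beta)\cap\Phi$ is finite, so that the classification reduces to counting these solutions as a function of $n$.

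For $n\ge-1$ I would prove prenilpotency by direct enumeration. When $n\ge0$ and $m,p\ge1$ we have $m^2+nmp+p^2\ge2>1$, so the only solutions have a zero coordinate; hence $(\N\alpha+\N\beta)\cap\Phi=\{\alpha,\beta\}$, which is finite, and $\alpha\neq-\beta$ holds automatically since $n\neq-2$. When $n=-1$ the equation $m^2-mp+p^2=1$ has exactly the solutions $(1,0),(0,1),(1,1)$, so $(\N\alpha+\N\beta)\cap\Phi=\{\alpha,\beta,\alpha+\beta\}$; here $\alpha+\beta$ is genuinely a root because $w_\alpha\beta=\beta+\alpha$. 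In every case the set is finite, giving prenilpotency.

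For $n\le-2$ I would show the pair is \emph{not} prenilpotent, splitting off the degenerate case first: if $\alpha=-\beta$ (which forces $n=-2$) the pair fails prenilpotency by definition, even though $(\N\alpha+\N\beta)\cap\Phi=\{\pm\alpha\}$ is finite. Otherwise I produce an infinite family inside $(\N\alpha+\N\beta)\cap\Phi$ by iterating reflections. Writing $c=-n\ge2$, one computes $w_\beta(a\alpha+b\beta)=a\alpha+(ca-b)\beta$ and $w_\alpha(a\alpha+b\beta)=(cb-a)\alpha+b\beta$; starting from $\alpha=(1,0)$ and alternating yields $(1,0),(1,c),(c^2-1,c),\dots$, a sequence of $W$-images of $\alpha$. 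Since $W$ carries real roots to real roots, these all lie in $\Phi$, and a short induction (this is where $c\ge2$ is used) shows the coordinates stay nonnegative and strictly increase, so the family is infinite and the pair is not prenilpotent. I expect this monotonicity induction to be the main technical point of the classification half.

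Finally, for the commutator relations I would feed the count into the Chevalley relation~\eqref{eq-Chevalley-relation}. When $n\ge0$ the product on the right is empty, giving $[X_\alpha(t),X_\beta(u)]=1$, which covers the ``otherwise'' case. When $n=-1$ the only index is $\gamma=\alpha+\beta$ (the enumeration already excludes $2\alpha+\beta,\alpha+2\beta,\dots$, whose norms exceed~$2$), so the rank-$2$ subsystem $\{\pm\alpha,\pm\beta,\pm(\alpha+\beta)\}$ is of type $A_2$ and the relation collapses to a single factor $X_{\alpha+\beta}(v)$. Here the Chevalley commutator formula forces the only contributing monomial to be $tu$, and the simply-laced structure constant is a unit $\pm1$, so $v=\pm tu$. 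I expect the genuine obstacle to be exactly this last point: confirming that the surviving term has the form $X_{\alpha+\beta}(\pm tu)$, i.e.\ that the structure constant is $\pm1$ and that the harmless sign ambiguity is precisely the one arising from the chosen parameterizations $X_\alpha,X_\beta,X_{\alpha+\beta}$.
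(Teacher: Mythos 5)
Your proof is correct, but it takes a genuinely different route from the paper's. The paper works directly from Tits' definition of prenilpotency (existence of $w,w'\in W$ sending both roots to positive, resp.\ negative roots) and translates it into hyperbolic geometry: using that the interior of the Tits cone is the future cone $F$, prenilpotency becomes the existence of points of $F$ on the positive side of both $\alpha^\perp$ and $\beta^\perp$ and of other points on the negative side of both, which is then settled by how the two hyperplanes meet (transverse intersection inside $F$ when $\alpha\cdot\beta\in\{0,\pm1\}$, disjoint positive half-spaces when $\alpha\cdot\beta\le-2$, nested half-spaces when $\alpha\cdot\beta\ge2$). You instead invoke the finiteness criterion the paper quotes from \cite{CR} (prenilpotent iff $\alpha\neq-\beta$ and $(\N\alpha+\N\beta)\cap\Phi$ is finite) and reduce everything to the arithmetic of the form $m^2+nmp+p^2$: direct enumeration of norm-$2$ solutions when $n\ge-1$, and, when $n\le-2$, an explicit infinite orbit of $\alpha$ under alternating reflections $w_\alpha,w_\beta$. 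Both halves are sound; in particular the monotonicity induction goes through, since from $(a,b)$ with $a>b\ge0$ one gets $(a,ca-b)$ with $ca-b\ge2a-b>a$, and symmetrically, so the coordinates strictly increase and the roots produced are genuine real roots (being $W$-images of $\alpha$). Your treatment of the relations (empty product for $n\ge0$; the $A_2$ Chevalley relation with structure constant $\pm1$ for $n=-1$) coincides with the paper's. As for what each approach buys: yours is more elementary and essentially rank-$2$ in nature---it nowhere uses the hyperbolic signature, so it would apply verbatim to any simply-laced diagram---but it leans on the cited equivalence of definitions from \cite{CR}, whereas the paper argues from Tits' definition itself; conversely, the paper's geometric setup is not wasted overhead, since the same machinery (future cone, projectivized chamber, distance computations) is exactly what is reused in lemma~\ref{lem-curious-property-of-facets} and in the proof of theorem~\ref{thm-Pst-to-St-an-isomorphism}.
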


We remark that if $\alpha\cdot\beta=-1$ then $\alpha+\beta$ is also a
real root, so the first case makes sense.  The sign in
$X_{\alpha+\beta}(\pm t u)$ depends on the choices of isomorphisms 
$X_\alpha$, $X_\beta$, $X_{\alpha+\beta}$ from $R$ to 
$\U_\alpha$, $\U_\beta$, $\U_{\alpha+\beta}$.  But we will not
use the relation itself, merely the fact that
$\U_{\alpha+\beta}=[\U_\alpha,\U_\beta]$.

We will use the following special features of the hyperbolic case.
First the signature of $\L$ is $(\rk A-1,1)$, so the vectors in
$\L\tensor\R$ of norm${}<0$ fall into two components.  
The fundamental chamber
$$
C:=\set{x\in\L\tensor\R}{\hbox{$x\cdot\alpha_i\leq0$ for all $i$}}
$$ meets only one of these components, which we call the future
cone~$F$.  The projectivization of $F$ is a copy of real hyperbolic
space of dimension $n:=\rk A-1$, for which we write $H^n$.  Second,
$C$ lies in the closure $\Fbar$, and its projectivization $PC$ is a
hyperbolic simplex together with its ideal vertices.  The reason for
this is that the Coxeter diagram underlying~$A$ is that of a
finite-covolume hyperbolic reflection group; see
\cite[\S\S6.8--6.9]{Humphreys}.  The Weyl group $W$ is defined as the
group generated by the reflections in the simple roots, and the Tits
cone is defined as the union of the $W$-images of~$C$.  We can now
state the third special property: the interior of the Tits cone is
exactly the future cone.  One direction is obvious: since
$C\sset\Fbar$ and $W$ preserves the open set $F$, the interior of the
Tits cone lies in $F$.  For the other direction, one must show that
the $W$-translates of $C\cap F$
cover~$F$.  This is part of Poincar\'e's polyhedron theorem.  A 
very clean treatment in the case of reflection groups appears
in \cite[Thm.\ 60]{de-la-Harpe}.

\begin{proof}[Proof of lemma~\ref{lem-prenilpotent-pairs}]
First we use the coincidence  of $F$ with the Tits cone's interior to
rephrase prenilpotency as follows. {\it Claim: real roots
  $\alpha,\beta$ form a prenilpotent pair if and only if some vector
  of $F$ has positive inner product with both of them, and some other
  vector of $F$ has negative inner product with both of them.}  
This follows from the following observation.
Fix a
vector~$v$ in the interior of $C$ and recall that $\Phi^+$ consists of
the real roots having negative product with it, and similarly for
$\Phi^-$.  Then for any $w\in W$, $w$ sends $\alpha,\beta$ into $\Phi^+$
(resp.\ $\Phi^-$) if
and only if $\alpha,\beta$ have negative (resp.\ positive) inner product with
$w^{-1}(v)\in F$.

Now suppose $\alpha,\beta\in\Phi$.  Since $\L\tensor\R$ has signature
$(n,1)$, $\alpha^\perp$ and $\beta^\perp$ meet in $F$ just if the
inner product matrix of $\alpha$ and $\beta$ is positive definite,
i.e., just if $\alpha\cdot \beta\in\{0,\pm1\}$.  (Here ${}\perp$
indicates the orthogonal complement in $\L\tensor\R$.)  In this case
$\alpha^\perp$ and $\beta^\perp$ are transverse at a point of
$\alpha^\perp\cap\beta^\perp\cap F$.  Obviously we may choose a nearby
element of $F$ on the positive side of both, and another element of $F$
on the negative side of both.  So in this case the pair is prenilpotent.

If $\alpha\cdot\beta\leq-2$ then their positive half-spaces in $F$ are
disjoint.  Therefore it is impossible to choose a point of $F$ that is
on the positive side of both $\alpha^\perp$ and $\beta^\perp$.  So the
pair is not prenilpotent.  

If $\alpha\cdot
\beta\geq2$ then one positive half-space in $F$ lies inside the other,
so obviously there is a point in the intersection, and similarly in
the intersection of the negative half-spaces.  So the pair is
prenilpotent.  This finishes the proof of the first claim.  

If $\alpha\cdot\beta=-1$, then $\alpha,\beta$ are simple roots for an
$A_2$ root system and the displayed relation is the corresponding
Chevalley relation.  If $\alpha\cdot\beta\geq0$ then the only roots
$\gamma$ in $\N\alpha+\N\beta$ are $\alpha$ and $\beta$ (indeed these
are the only vectors of norm~$\leq2$).  Since the product on the right
side of the Chevalley relation \eqref{eq-Chevalley-relation} is empty,
the
relation is $[\U_\alpha,\U_\beta]=1$.
\end{proof}

We mentioned above that $PC\sset\Hnbar$ is a hyperbolic simplex.   
Its facets have a curious
geometric property that turns out to be the key to our
proof of theorem~\ref{thm-explicit-presentation}.  We have not seen anything like it in Kac--Moody
theory before.  Recall that the hyperbolic distance between two points
of $H^n$, represented by vectors $x,y\in F$, is
$\cosh^{-1}\sqrt{(x\cdot y)^2/x^2y^2}$.  

\begin{lemma}
\label{lem-curious-property-of-facets}
Suppose $\phi$ is any facet of the projectivized Weyl chamber $PC$ and $p$
is any point of $\phi$.  Then there is a facet $\phi'$ of $PC$ that
makes angle $\pi/3$ with $\phi$, such that the hyperbolic distance 
$d(p,\phi\cap\phi')$ is at most $\cosh^{-1}\sqrt{4/3}\approx.549$.
\end{lemma}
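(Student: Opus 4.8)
The plan is to convert the statement into linear algebra on the root lattice $\L$ and then reduce it to bounding a single inner product. I would fix $\phi=\phi_i$ to be the projectivization of $\alpha_i^\perp\cap\Fbar$ and represent the point $p$ by a vector $x$ with $x^2<0$, $x\cdot\alpha_i=0$, and $x\cdot\alpha_l\le0$ for every node $l$ (so $x$ lies in the chamber cone $C$). Because $A$ is simply laced, the facets meeting $\phi$ at angle $\pi/3$ are exactly the $\phi_j$ with $j$ joined to $i$ (there $\alpha_i\cdot\alpha_j=-1$, whereas unjoined nodes give angle $\pi/2$). For such a $j$ the face $\phi\cap\phi_j$ is the projectivization of $\alpha_i^\perp\cap\alpha_j^\perp$, and decomposing $x$ orthogonally along $V=\mathrm{span}(\alpha_i,\alpha_j)$ gives $\sinh^2 d(p,\phi\cap\phi_j)=(x\cdot\alpha_j)^2/\big((-x^2)\,\bar\alpha_j^{\,2}\big)$, where $\bar\alpha_j$ is the projection of $\alpha_j$ to $\alpha_i^\perp$ and $\bar\alpha_j^{\,2}=2-\tfrac12=\tfrac32$. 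Since $\sinh^2(\cosh^{-1}\sqrt{4/3})=\tfrac13$, the lemma reduces to the arithmetic assertion: \emph{some neighbor $j$ of $i$ satisfies $(x\cdot\alpha_j)^2\le\tfrac12(-x^2)$.}

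Next I would locate the worst point. Writing $x=\sum_{k\ne i}\mu_k v_k$ with $\mu_k\ge0$, where the $v_k$ are the suitably signed fundamental weights (the vertices of $PC$), one gets the clean identity $x\cdot\alpha_j=-\mu_j$, so I must produce a neighbor with $2\mu_j^2\le -x^2=\sum_{k,l\ne i}\mu_k\mu_l\,(v_k\cdot v_l)$. Each $d(\,\cdot\,,\phi\cap\phi_j)$ is a convex function on $H^n$, so the super-level sets $\{\,p:\ d(p,\phi\cap\phi_j)\ge c\ \forall j\sim i\,\}$ are convex and the maximum over $\phi$ of $\min_{j\sim i}d(p,\phi\cap\phi_j)$ is pinned down by its active constraints. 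When $i$ is a pendant node, every vertex of $\phi$ except $v_j$ lies on the unique joined face, so the maximizer is the opposite vertex $v_j$, where $\mu=e_j$ and $\sinh^2 d=\tfrac23\,(-v_j^2)^{-1}=\tfrac23\,(-(A^{-1})_{jj})^{-1}$. When $i$ has several neighbors every vertex lies on some joined face, so the maximizer is an interior ``incenter-type'' point whose min-distance is strictly smaller.

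The crux is thus the pendant case, where the bound becomes $-(A^{-1})_{jj}\ge 2$ for $j$ the neighbor of a pendant node. I would prove this from $(A^{-1})_{jj}=\det(A\setminus j)/\det A$: deleting $j$ isolates the pendant node and leaves only proper connected subdiagrams, which are of finite or affine type, so the finite-type pieces have positive determinant and one must rule out an affine piece (which would make $v_j$ an ideal vertex and destroy the bound). This turns the inequality into $\det(A\setminus j)\ge 2\,|\det A|$, which I expect to hold sharply — with equality, giving $(A^{-1})_{jj}=-2$ — for the rank-$4$ triangle-with-pendant diagram, and with room to spare otherwise. The main obstacle I anticipate is making this last step uniform: either exhibiting a structural reason that the complementary subdiagrams are always finite type with large enough determinant, or else organizing a clean verification across the finitely many diagrams of Table~\ref{tab-diagrams}, supplemented by a separate (non-sharp) estimate for the interior maxima occurring at the higher-degree nodes.
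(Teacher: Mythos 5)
Your reduction is correct as far as it goes: the angle-$\pi/3$ facets meeting $\phi_i$ are exactly the $\phi_j$ with $j$ joined to $i$, the projected root has norm $3/2$, and the lemma is equivalent to finding a neighbor $j$ with $(x\cdot\alpha_j)^2\le\tfrac12(-x^2)$. Your pendant-node analysis is also sound in outline: the maximum of the single convex function $d(\cdot,\phi\cap\phi_j)$ over the simplex is attained at the opposite vertex $\omega_j$, the ideal-vertex worry can be excluded structurally (an affine component $B$ of $A\setminus j$ would make $B\cup\{j\}$ a proper connected subdiagram of $A$ properly containing an affine diagram, contradicting hyperbolicity), and the inequality $-(A^{-1})_{jj}\ge 2$ does hold, with equality exactly as you predict. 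But there is a genuine gap where most of the work lies: the case where $i$ has several joined neighbors, which is the generic case among the facets of the $18$ diagrams. Your claim that the interior ``incenter-type'' maximizer has min-distance \emph{strictly smaller} than the bound is both unjustified and false. For the rank-$4$ complete-graph diagram, $PC$ is the regular ideal tetrahedron, each facet $\phi$ is a regular ideal triangle all three of whose edges are angle-$\pi/3$ faces, and the max-min point is the incenter, whose distance to each edge is exactly $\cosh^{-1}\sqrt{4/3}$ (the inradius of the ideal triangle, i.e.\ the short edge of the $(2,3,\infty)$ triangle). So equality is attained precisely in a multi-neighbor case, no ``non-sharp estimate'' can cover it, and your proposal offers no mechanism at all for bounding these cases. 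A secondary error feeds this gap: super-level sets of convex functions are \emph{not} convex (sub-level sets are), so your active-constraint argument for locating the maximizer of $\min_j d(\cdot,\phi\cap\phi_j)$ does not stand; max-min problems for families of convex functions are not resolved by vertex/interior dichotomies of this kind.

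For comparison, the paper avoids locating the true max-min point altogether. For each facet $\phi=\phi_i$ it takes the explicit point $q$ represented by $\sum_{j\in J}\omega_j$ (where $J$ is the set of neighbors of $i$), notes that $\phi$ is covered by the cones $K_j$ spanned by $q$ and $\phi\cap\phi_j$, and that within $K_j$ the distance to $\phi\cap\phi_j$ is maximized at $q$; so it suffices to check $\cosh^2 d(q,\phi\cap\phi_j)\le 4/3$ for every facet of every diagram, a finite list of inner-product computations carried out in PARI/GP (with equality in many cases, consistent with the sharpness above). To salvage your approach you would need to replace the false ``strictly smaller'' claim with such a covering device (or an exact treatment of the max-min point in every multi-neighbor case), and you would still need to carry out the finite verification you deferred in the pendant case; at that point your argument essentially becomes the paper's.
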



\begin{proof}
We applied the following argument to each of the~$18$ possibilities
for~$A$.  We write $I$ for the set of $A$'s nodes, and for $i\in I$ we
write $\phi_i$ for the corresponding facet of~$PC$.  We also fix
elements $\omega_i\in\L\tensor\Q$ with $\omega_i\cdot\alpha_j=-1$ or~$0$ 
according to whether $i,j\in I$ are equal or not.  In Lie terminology
these are the fundamental weights.  Geometrically, $\omega_i$ represents
the vertex of $PC$ opposite $\phi_i$.  

We applied the following argument to each of the $\rk A$ many
possibilities for $\phi:=\phi_i$.  We  write
$J$ for the set of $j\in I$ that are joined to $i$.  We define $q$ as
the point of~$H^n$ represented by the sum of the $\omega_{j\in J}$.  It
lies in the interior of the face of $\phi$ that is opposite (in
$\phi$) to the face $\phi\cap\bigl(\cap_{j\in J}\phi_j\bigr)$ of~$\phi$.  For
each $j\in J$ we write $K_j$ for the convex hull of $q$ and
$\phi\cap\phi_j$.  By the property of $q$ just mentioned, $PC$ is the
union of the $K_j$.  For every $j\in J$ we found by inner product
computations that $\cosh^2\bigl(d(q,\phi\cap\phi_j)\bigr)\leq4/3$.
In surprisingly many cases we found equality.  We
used the PARI/GP package \cite{PARI} for the 
calculations.

Now, given $p\in\phi$, it lies in $K_j$ for some $j\in J$, and we set
$\phi'=\phi_j$.  By  $K_j$'s definition, $q$ is its point furthest
from $\phi\cap\phi'$.   So 
$$
d(p,\phi\cap\phi')
\leq d(q,\phi\cap\phi')\leq\cosh^{-1}\sqrt{4/3}.
$$
\end{proof}

At the beginning of this section we mentioned the {\it pre-Steinberg
  group} $\PSt_A$.  It is a group functor defined in
\cite{Allcock-amalgams}, by the same definition as the Steinberg group,
except that we only impose the Chevalley relations for prenilpotent
pairs $\alpha,\beta$ that are {\it classically prenilpotent\/}.  This
means that 
\hbox{$(\Q\alpha+\Q\beta)\cap\Phi$} is finite and $\alpha+\beta\neq0$.
Arguing as in the proof of lemma~\ref{lem-prenilpotent-pairs} shows
that this is equivalent to $\alpha=\beta$ or 
$\alpha\cdot\beta\in\{0,\pm1\}$.  
After stating the following theorem about $\PSt_A$, we will show how
theorem~\ref{thm-explicit-presentation} reduces to it.  Then we will prove it.

\begin{theorem}
\label{thm-Pst-to-St-an-isomorphism}
The natural map $\PSt_A(R)\to\St_A(R)$ is an isomorphism.
\end{theorem}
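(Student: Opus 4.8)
The plan is to prove injectivity of $\PSt_A(R)\to\St_A(R)$; surjectivity is automatic, since both groups are quotients of $\freeproduct_{\alpha\in\Phi}\U_\alpha$ and the defining relations of $\PSt_A$ (the Chevalley relations for classically prenilpotent pairs) form a subset of those of $\St_A$. By lemma~\ref{lem-prenilpotent-pairs}, the only relations of $\St_A$ \emph{not} already imposed in $\PSt_A$ are the commutations $[\U_\alpha,\U_\beta]=1$ for pairs of real roots with $\alpha\cdot\beta\geq2$. So everything reduces to showing that these relations already hold in $\PSt_A$.

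Two inputs drive the argument. First, the generators $S_i$ realize the simple reflections, so the Weyl group acts on $\PSt_A$ by conjugation, permuting the $\U_\alpha$ (up to reparameterization, which does not affect commuting) according to the $W$-action on $\Phi$. Since $W$ preserves the inner product, one cannot decrease $\alpha\cdot\beta$ by conjugation alone; the reduction must come from an honest relation. That relation is the following $A_2$ identity: if $\gamma$ is a real root with $\alpha\cdot\gamma=-1$, then $\alpha+\gamma$ and $-\gamma$ are real roots with $(\alpha+\gamma)\cdot(-\gamma)=-(\alpha\cdot\gamma)-\gamma\cdot\gamma=-1$, so the classical Chevalley relation of lemma~\ref{lem-prenilpotent-pairs} gives $\U_\alpha=[\U_{\alpha+\gamma},\U_{-\gamma}]$ inside $\PSt_A$. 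Hence if some $\U_\beta$ commutes elementwise with both $\U_{\alpha+\gamma}$ and $\U_{-\gamma}$, it commutes with every $X_\alpha(\pm su)=[X_{\alpha+\gamma}(s),X_{-\gamma}(u)]$, and so with all of $\U_\alpha$.

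I would then induct on the integer $\alpha\cdot\beta\geq2$ (equivalently on the hyperbolic distance $\cosh^{-1}\bigl(\tfrac12\,\alpha\cdot\beta\bigr)$ between the disjoint walls $\alpha^\perp,\beta^\perp\subset H^n$). Given such a pair, $\alpha^\perp$ is a wall of the Coxeter complex, hence a facet of some $W$-translate of $PC$; I apply the $W$-translate of lemma~\ref{lem-curious-property-of-facets} to this facet and to the point $p$ on it nearest to $\beta^\perp$. This yields a real root $\gamma$ with $\alpha\cdot\gamma=-1$ (the $\pi/3$ angle) whose ridge $\alpha^\perp\cap\gamma^\perp$ lies within hyperbolic distance $\cosh^{-1}\sqrt{4/3}$ of $p$. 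The three walls $\alpha^\perp$, $\gamma^\perp$, $(\alpha+\gamma)^\perp$ all pass through this ridge, and the proximity bound is exactly what forces the auxiliary inner product $\gamma\cdot\beta$ into the ``good'' range: both pairs $(\alpha+\gamma,\beta)$ and $(-\gamma,\beta)$ then either become classically prenilpotent (inner product in $\{0,1\}$, so commuting holds in $\PSt_A$ outright) or have inner product $\geq2$ but strictly smaller than $\alpha\cdot\beta$ (so commuting holds by induction). Feeding this into the identity $\U_\alpha=[\U_{\alpha+\gamma},\U_{-\gamma}]$ gives $[\U_\alpha,\U_\beta]=1$, closing the induction.

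The main obstacle is precisely this metric step. One must verify that the facet supplied by lemma~\ref{lem-curious-property-of-facets}, with its bound $\cosh^{-1}\sqrt{4/3}$, can always be chosen so that $\gamma\cdot\beta$ is negative and of magnitude at least $1$ (giving genuine progress) while avoiding the two bad ranges in which the auxiliary pairs either fail to commute (inner product $-1$) or cease to be prenilpotent (inner product $\leq-2$). Tracking the sign and placement of $\gamma$ relative to $\beta^\perp$, and confirming that the relevant chamber distances never exceed $\cosh^{-1}\sqrt{4/3}$, is where the hyperbolic-geometric estimates are indispensable; this numerical threshold is exactly the input that makes the induction go through for each of the eighteen diagrams of table~\ref{tab-diagrams}.
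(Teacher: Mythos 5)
Your outline reproduces the paper's strategy (reduce to $[\U_\alpha,\U_\beta]=1$ for $k:=\alpha\cdot\beta\geq2$, split $\alpha$ inside an $A_2$ subsystem, induct on $k$ using lemma~\ref{lem-curious-property-of-facets}), but it has a fatal gap at the base case $k=2$. There the walls $\alpha^\perp$ and $\beta^\perp$ are not ultraparallel but asymptotic: $\nu=\alpha-\beta$ is a null vector orthogonal to both, so the two walls share an ideal point and the infimum of distances between them is $0$ and is not attained. Hence ``the point $p$ of $\alpha^\perp$ nearest to $\beta^\perp$,'' on which your metric step rests, does not exist when $k=2$; correspondingly the distance computation that would drive the contradiction degenerates (in the paper's formula below, the denominator $4-k^2$ vanishes). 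This is exactly why the paper treats $k=2$ by a separate, non-metric argument: normalize by the Weyl group so that $\nu$ lies in $C$, hence represents an ideal vertex of $PC$; note that the simple roots orthogonal to $\nu$ form an irreducible \emph{affine} subdiagram $A^\nu$ whose Weyl group stabilizes $\nu$; move $\alpha$ to a simple root of $A^\nu$; and use the fact that every node of an affine diagram is joined to another node to produce $\alpha'$ with $\alpha'\cdot\alpha=\alpha'\cdot\beta=1$. Your proposal contains no substitute for this, so your induction never starts.

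Second, for $k\geq3$ the step you yourself call ``the main obstacle''---that the root $\gamma$ supplied by lemma~\ref{lem-curious-property-of-facets} necessarily has $\gamma\cdot\beta\in\{-1,-2,\dots,-(k-1)\}$---is the heart of the paper's proof, and you leave it unproved. The paper settles it by an explicit computation in the rank-$3$ lattice spanned by $\alpha$, $\alpha'=\alpha+\gamma$, and $\beta$: assuming $m:=\alpha'\cdot\beta\leq0$, one finds
\[
d(p,q)=\cosh^{-1}\sqrt{\frac{4}{3}\cdot\frac{3+km-k^2-m^2}{4-k^2}},
\]
which by monotonicity in $m$ and then in $k$ exceeds $\cosh^{-1}\sqrt{4/3}$, contradicting the lemma. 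Nothing here is routine: the paper's closing remark observes that the bound of lemma~\ref{lem-curious-property-of-facets}, though optimal, is only \emph{barely} sufficient, so no soft argument can replace this estimate. There is also a smaller logical slip in your setup: you count inner product $0$ as a ``good'' value for an auxiliary pair, but if $(\alpha+\gamma)\cdot\beta=0$ then $(-\gamma)\cdot\beta=k$, which is neither classically prenilpotent nor covered by the induction hypothesis; you need both auxiliary inner products \emph{strictly} positive (hence each at most $k-1$), which is what the paper's contradiction argument actually delivers. In sum, you have an accurate road map of the paper's proof, but the two essential steps---the $k=2$ case and the metric estimate---are missing.
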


\begin{proof}[Proof of theorem~\ref{thm-explicit-presentation}, given theorem~\ref{thm-Pst-to-St-an-isomorphism}]
Theorem~1.2 of \cite{Allcock-amalgams} gives an
explicit presentation for $\PSt_A(R)$, namely the one in
table~\ref{tab-explicit-presentation}.  So theorem~\ref{thm-Pst-to-St-an-isomorphism}  is
identical to the first part of
theorem~\ref{thm-explicit-presentation}.

For the second part, we recall that Tits defined $\G_{\!A}(R)$ as the
quotient of $\St_A(R)$ by the relations
$\htilde_i(a)\htilde_i(b)=\htilde_i(ab)$ for all $i\in I$ and all
units $a,b$ of~$R$.  In fact imposing these relations for a single $i$
automatically gives the others too.  This follows from the fact that all roots are equivalent under the Weyl group, because $A$ is simply laced.
\end{proof}

\begin{proof}[Proof of theorem~\ref{thm-Pst-to-St-an-isomorphism}]
We
must show that the Chevalley relations for classically prenilpotent
pairs imply all the other Chevalley relations.
We will abbreviate $\PSt_A(R)$ and $\St_A(R)$ to $\PSt$ and $\St$.

By lemma~\ref{lem-prenilpotent-pairs}, any prenilpotent pair has inner
product${}\geq-1$.  Therefore it suffices to prove the following by
induction on $k\geq-1$: for every prenilpotent pair
$\alpha,\beta\in\Phi$ with $\alpha\cdot\beta=k$, the Chevalley
relations of $\alpha$ and $\beta$ in $\St$ already hold in $\PSt$.  
If $\alpha=\beta$ then the Chevalley relations say merely that
$\U_\alpha$ is commutative, which follows from the
multiplication rules in $\U_\alpha$. So we will suppose
$\alpha\neq\beta$.
If
$k\in\{0,\pm1\}$ then the pair is classically
prenilpotent, so this holds by definition of~$\PSt$.

So suppose $k:=\alpha\cdot\beta\geq2$.
By lemma~\ref{lem-prenilpotent-pairs}, 
the Chevalley relation we must establish
is
$[\U_\alpha,\U_\beta]=1$. 
We will exhibit roots $\alpha',\alpha''$ with 
\begin{equation}
\label{eq-properties-of-alpha-prime-and-alpha-double-prime}
\alpha'+\alpha''=\alpha,
\qquad
\alpha'\cdot\beta>0,
\quad\hbox{and}\quad
\alpha''\cdot\beta>0.   
\end{equation}
It follows that both these
inner products are less than~$k$.  By induction we get
$[\U_{\alpha'},\U_\beta]=[\U_{\alpha''},\U_{\beta}]=1$.  Since
$\alpha=\alpha'+\alpha''$, we have
$\U_\alpha=[\U_{\alpha'},\U_{\alpha''}]$.  Since $\U_\beta$ commutes
with $\U_{\alpha'}$ and $\U_{\alpha''}$, it also commutes with
$\U_{\alpha}$, as desired.  

It remains to construct $\alpha'$ and $\alpha''$.  We distinguish two
cases: $k=2$ and $k>2$.
First suppose $k=2$.  
What is special about this case is that the span of $\alpha$ and
$\beta$ is degenerate: $\nu:=\alpha-\beta$ is orthogonal to both and
has norm~$0$.  It
will suffice to exhibit $\alpha'$ with
$\alpha'\cdot\alpha=\alpha'\cdot\beta=1$, for then we can take
$\alpha'':=\alpha-\alpha'$ and apply the previous paragraph.

Since $\Fbar\cup(-\Fbar)$ is the set of vectors of
norm${}\leq0$, we have
$\nu\in\pm\Fbar$.  By
exchanging $\alpha,\beta$ we may suppose without loss that $\nu\in\Fbar$.
We fix a vector $x\in\Lambda$ in the interior of $C$ and consider its
inner products with the images of $\nu$ under the Weyl group~$W$.
These inner products are integral because $x\in\Lambda$, and nonpositive
because $x\cdot F\sset(-\infty,0)$.  Therefore they achieve their
maximum, which is to say: after replacing $\alpha,\beta$ by their
images under an element of~$W$ we may suppose without loss that
$\nu\cdot x=\max_{w\in W}w(\nu)\cdot x$.  This maximality forces
$\nu\cdot\alpha_i\leq0$ for all~$i$, for if $\nu\cdot\alpha_i>0$ then
reflection in $\alpha_i$ increases $\nu$'s inner product with $x$.  So
$\nu\in C$.   That is, it
represents an ideal vertex of $PC$.  

The simple roots orthogonal to $\nu$ correspond to the nodes of an
irreducible affine subdiagram $A^\nu$ of~$A$, and the $W$-stabilizer
of $\nu$ is the Weyl group of $A^\nu$.  By replacing $\alpha,\beta$ by
their images under an element of this stabilizer, we may suppose
without loss that $\alpha$ is a simple root corresponding to a node
of $A^\nu$.  Since every node of $A^\nu$ is joined to some other node
of $A^\nu$, $\alpha$ lies in some $A_2$ root system in $A^\nu$.  Inside this $A_2$ is a root $\alpha'$ having inner
product~$1$ with~$\alpha$.  Since $\alpha'$ is orthogonal to~$\nu$, it also
has inner product~$1$ with~$\beta$.  This finishes the case $k=2$.

Now for the inductive step: fixing $k\geq3$, we seek
roots $\alpha',\alpha''$ with the properties \eqref{eq-properties-of-alpha-prime-and-alpha-double-prime}.  
Regarding $\alpha^\perp$ and $\beta^\perp$
as hyperplanes in $H^n$, we define $p,q\in H^n$ as follows.
First, $p$ is the point of $\alpha^\perp$
closest to $\beta^\perp$.  Second, $q$ is a point of $\alpha^\perp$,
which is orthogonal to some real root $\alpha'$ with
$\alpha'\cdot\alpha=1$, and closest possible to $p$  among all such points.  
Because $p$ lies on
some facet of some chamber, lemma~\ref{lem-curious-property-of-facets} shows that such an $\alpha'$
exists, so the definition of $q$ makes sense.  It also shows that $d(p,q)$ is at most $\cosh^{-1}\sqrt{4/3}$.

Together, $\alpha$ and $\alpha'$ span an $A_2$ root system, of which 
$\alpha'':=\alpha-\alpha'$ is another root.  
To prove $0<\beta\cdot\alpha'$ and
$0<\beta\cdot\alpha''$,  suppose otherwise, say
$\beta\cdot\alpha'=m\leq0$.  The idea is to work out $p$ and $q$
explicitly and find that $d(p,q)$ is larger than
$\cosh^{-1}\sqrt{4/3}$, which is a contradiction.  The inner product
matrix of $\alpha,\alpha',\beta$ is
$$
\begin{pmatrix}
2&1&k\\
1&2&m\\
k&m&2
\end{pmatrix}
$$ and (a vector representing) $p$ is the projection of $b$ to $a^\perp$, i.e., $p=\beta-\alpha(\beta\cdot
\alpha)/2$.  Now, $q$ is the projection of $p$ to $\alpha^\perp\cap\alpha'^\perp$,
or equivalently $\alpha^\perp\cap(\alpha''-\alpha')^\perp$.  The advantage of the
latter description is that $\alpha$ is orthogonal to $\alpha''-\alpha'$.  
Since $p$ is already orthogonal to $\alpha$, projecting it to
this codimension~$2$ subspace is the same as projecting it to
$(\alpha''-\alpha')^\perp$.  
For
calculations in our basis we note that $\alpha''-\alpha'$ has norm~$6$ and we
rewrite it as $\alpha-2\alpha'$.
So $q=p-(\alpha-2\alpha')\big(p\cdot(\alpha-2\alpha')\bigr)/6$.
Calculation reveals
$$
d(p,q)=\cosh^{-1}\sqrt{\frac{4}{3}\cdot\frac{3+km-k^2-m^2}{4-k^2}}
$$ By applying $\partial/\partial m$ to the radicand and using $m\leq0$
and $k\geq3$, one checks that the right side is decreasing as
a function of $m$.  Therefore $d(p,q)$ is at least what one would get by
plugging in $0$ for $m$:
$$
d(p,q)\geq\cosh^{-1}\sqrt{\frac{4}{3}\cdot\frac{3-k^2}{4-k^2}}
$$
By differentiating one shows that the right side
is decreasing in $k$.  So $d(p,q)$ is larger than the limit as
$k\to\infty$, which is $\cosh^{-1}\sqrt{4/3}$.  This is a
contradiction, proving the claim.
\end{proof}

\begin{remark}
The origin of the proof was picture-drawing in the hyperbolic plane
associated to the span of $\alpha,\alpha',\beta$.  We recommend
sketching the configuration of $\alpha^\perp$, $\alpha'^\perp$,
$\alpha''^\perp$ and $\beta^\perp$ when $m=0$, and contemplating how
it would change if $m$  were negative.  When $m=0$,  the quadrilateral spanned by $p,q$ and their projections to
$\beta^\perp$ converges to a $(2,3,\infty)$ triangle as $k\to\infty$.
So the constant $\cosh^{-1}\sqrt{4/3}$ is the length of the short edge
of the $(2,3,\infty)$ triangle in $H^2$.  It is curious that this
bound in lemma~\ref{lem-curious-property-of-facets}, which was optimal, is only barely
sufficient for the proof of theorem~\ref{thm-Pst-to-St-an-isomorphism}.
\end{remark}

\end{document}